\documentclass[11pt]{article}
\usepackage[utf8]{inputenc}
\usepackage[a4paper]{geometry}
\usepackage[english]{babel}
\usepackage[super]{nth}
\usepackage{amsmath, amsfonts, amsthm, physics, enumitem}
\usepackage{titlesec}
\usepackage{tikz} 
\usetikzlibrary{positioning}
\usepackage{mathtools}
\DeclarePairedDelimiter\floor{\lfloor}{\rfloor}
\usetikzlibrary{patterns,arrows,decorations.markings}
\usepackage{authblk}
\usepackage{tikz-cd}
\usepackage{centernot}

\makeatletter
\newtheorem*{rep@theorem}{\rep@title}
\newcommand{\newreptheorem}[2]{%
\newenvironment{rep#1}[1]{%
 \def\rep@title{#2 \ref{##1}}%
 \begin{rep@theorem}}%
 {\end{rep@theorem}}}
\makeatother

\newtheorem{thm}{Theorem}[section]
\newtheorem{prop}[thm]{Proposition}
\newtheorem{cor}[thm]{Corollary}

\newtheorem{q}[thm]{Question}
\newtheorem{rmk}[thm]{Remark}

\newreptheorem{thm}{Theorem}

\theoremstyle{definition}

\newcommand{\midarrow}{\tikz \draw[-triangle 90] (0,0) -- +(.1,0);}

\newcommand{\KT}{\mathrm{KT}^{4}}

\newcommand{\der}[1]{\frac{\partial}{\partial#1}}
\newcommand{\dif}[1]{\mathrm{d}#1}
\newcommand{\erfc}{\mathrm{erfc}}

\title{Harmonic Forms on the Kodaira-Thurston Manifold}
\author{Tom Holt\thanks{Thomas.Holt@warwick.ac.uk}\,   and Weiyi Zhang\thanks{Weiyi.Zhang@warwick.ac.uk}}
\affil{Mathematics Institute,  University of Warwick, Coventry, CV4 7AL, England}

\date{}
\begin{document}

\maketitle

\begin{abstract}
We introduce an effective method to determine the $\bar\partial$-harmonic forms on the Kodaira-Thurston manifold endowed with an almost complex structure and an almost Hermitian metric.  Using the Weil-Brezin transform, we reduce the elliptic PDE system to countably many linear ODE systems. By solving a fundamental problem on linear ODE systems, the problem of finding $\bar\partial$-harmonic forms is equivalent to a generalised Gauss circle problem. 

We demonstrate two remarkable applications. First, the dimension of the almost complex $\bar\partial$-Hodge numbers on the Kodaira-Thurston manifold could be arbitrarily large. Second, Hodge numbers vary with different choices of almost Hermitian metrics. This answers a question of Kodaira and Spencer in Hirzebruch's 1954 problem list.
\end{abstract}

\tableofcontents

\section{Introduction}
Hodge theory is a method introduced by Hodge in the 1930s to study the cohomology groups of compact manifolds using the theory of elliptic partial differential equations. Not only has Hodge theory since become part of the standard repertoire in algebraic geometry, particularly through its connection to the study of algebraic cycles, but also the elliptic theory has become a fundamental tool to study the topology of manifolds. 

The most fundamental idea in classical Hodge theory for complex manifolds is the introduction of the finite dimensional vector spaces of $\bar\partial$-harmonic $(p, q)$-forms $\mathcal H^{p, q}$ with respect to a K\"ahler (or Hermitian) metric. When the manifold is K\"ahler, these groups give rise to a decomposition of the cohomology groups with complex coefficients. Each space $\mathcal H^{p, q}$ can be identified with a coherent sheaf cohomology group, called the Dolbeault group, which depends only on the underlying complex manifold but not on the choice of the Hermitian metric. Their dimensions $h^{p, q}$, called the Hodge numbers, are important invariants of complex manifolds.  They do not change when the complex structures are K\"ahler and vary continuously, but they are in general not topological invariants.

As discussed in \cite{Hir}, we can still define the group $\mathcal H^{p, q}$ for closed almost complex manifolds with almost Hermitian metrics. Precisely, the almost complex structure $J$ on $M$ induces a decomposition of the complexified cotangent bundle $T^*M\otimes \mathbb C=(T^*M)^{1, 0}\oplus (T^*M)^{0,1}$, which in turn induces a decomposition of complex differential forms into $(p,q)$-forms. We define $\bar\partial$ (respectively $\partial$) to be the component of the exterior derivative $d$ that raises $q$ (respectively $p$) by one. Notice we no longer have $d=\partial+\bar\partial$ or $\bar\partial^2=0$ when $J$ is not integrable. Given an almost Hermitian metric we also define the operator $\bar{\partial}^{*}=-*\partial*$ along with the $\bar{\partial}$-Laplacian
$$\Delta_{\bar\partial}=\bar\partial\bar\partial^*+\bar\partial^*\bar\partial$$
Here $*$ denotes the Hodge star operator.
The space $\mathcal H^{p,q}$ is defined to be the kernel of $\Delta_{\bar\partial}$ in the space of $(p, q)$-forms. When the manifold is compact, as the notation would suggest, $\bar{\partial}^{*}$ is the $L^2$ adjoint of $\bar\partial$ with respect to an almost Hermitian metric and so we have
  $\ker\Delta_{\bar\partial}=\ker\bar\partial\cap\ker\bar\partial^*$. Using the initial definition of $\bar{\partial}^{*}$, this is equivalent to $\mathcal H^{p,q}=\ker\bar\partial\cap \ker\partial *$. See \cite{CZ} for a detailed treatment.

Although the almost complex Hodge theory could be very useful, in particular for geometrically interesting almost complex structures, not much is known beyond the attempts to develop harmonic theory for almost K\"ahler manifolds by Donaldson \cite{Don}, for strictly nearly K\"ahler $6$-manifolds by Verbitsky \cite{Ver}, and very recently the introduction of a variant of $\mathcal H^{p,q}$ using $\bar\partial$-$\mu$-harmonic forms by Cirici and Wilson \cite{CW}. Moreover, few non-trivial examples of $\mathcal H^{p,q}$ for non-integrable almost complex structures have been computed. 

In this paper, we offer an effective method to study $\bar\partial$-harmonic forms, or more generally other linear PDE systems. For example this method can be applied to any torus bundle over $S^1$, but here we will restrict our attention to the Kodaira-Thurston manifold, which was the first non-K\"ahler example to be found that admits both complex (due to Kodaira) and symplectic (due to Thurston) structures. Its definition is recalled in Section \ref{KTM}. 

There are two remarkable features of the $\mathcal H^{p,q}$ and their dimensions $h^{p,q}$ that grow out of our computation. First, as mentioned above, the Hodge numbers are constant in a small neighbourhood of the moduli of a given K\"ahler manifold \cite{Voi}, in particular they do not change when the complex structures are K\"ahler and vary continuously. Moreover, even if Hodge numbers are in general not topological invariants, we know they are bounded, for example by the Betti numbers, for a fixed compact complex manifold with K\"ahler structures. 

However, neither statement is true in the almost complex setting when we vary almost complex structures in an almost K\"ahler family. The following is derived from Corollary \ref{anyint} and Proposition \ref{sol>0}.

\begin{thm}\label{intro1}
There is a continuous family of non-integrable almost complex structures $J_{a,b}$, $a,b\in \mathbb R$, $b\neq 0$, on the Kodaira-Thurston manifold whose $h^{0,1}_{J_{a,b}}=h^{2,1}_{J_{a,b}}$ are computed using certain almost K\"ahler metrics. For any $n\in\mathbb Z^+$ such that $8\centernot\mid n$, there is a choice of $a$ and $b$ such that $h^{0,1}_{J_{a,b}}=n$.
\end{thm}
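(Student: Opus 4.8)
The plan is to reduce the computation of $h^{0,1}_{J_b}$ to an explicit arithmetic counting problem, and then exhibit, for each admissible $n$, a parameter $b$ realizing that value. The abstract signals the overall machinery: the Weil–Brezin transform converts the elliptic PDE system for $\bar\partial$-harmonic $(0,1)$-forms into countably many linear ODE systems indexed by an integer (the Fourier-type mode along the circle direction), and solving these reduces the dimension count to a generalized Gauss circle problem. So my first step would be to write down, for the family $J_b$ together with the chosen almost Kähler metrics, the Laplacian $\Delta_{\bar\partial}$ acting on $(0,1)$-forms on the Kodaira–Thurston manifold, and apply the Weil–Brezin transform to block-diagonalize it.

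Next I would analyze each ODE block. For the generic (nonzero-mode) blocks, the solutions will be governed by a second-order system whose $L^2$ eigenfunctions are essentially Hermite-type; a harmonic form contributes to $\mathcal{H}^{0,1}_{J_b}$ precisely when the relevant eigenvalue vanishes, and I expect this to impose a Diophantine condition relating the mode index to $b$. Concretely, I anticipate that the count $h^{0,1}_{J_b}$ equals the number of integer lattice points $(m,k)$ satisfying an inequality of the shape $0 < (\text{quadratic form in } m,k) \le C(b)$, or an equality $Q(m,k)=f(b)$ — this is the ``generalised Gauss circle problem'' alluded to in the abstract. The residual constraint $8 \nmid n$ strongly suggests that the lattice point count naturally produces values in certain residue classes mod $8$, so I would compute this counting function carefully and identify exactly which nonnegative integers arise as $b$ ranges over $\mathbb{R}\setminus\{0\}$.

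**The main step** is then a realization argument: showing that for every $n \in \mathbb{Z}^+$ with $8 \nmid n$ there exists a value of $b$ with $h^{0,1}_{J_b}=n$. Since $b$ varies continuously and the counting function is integer-valued, it jumps by discrete amounts as $b$ crosses threshold values determined by the Diophantine condition. I would track how the count changes as $b$ increases from $0$: new lattice solutions enter one (or a controlled number) at a time, so the set of attained values is determined by the multiplicities of these thresholds. The task is to verify that the jumps never skip any integer in the allowed residue classes — equivalently, that the solution multiplicity at each threshold, combined with the starting value, sweeps out precisely $\{\,n : 8 \nmid n\,\}$.

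**The hard part**, I expect, is the fine arithmetic bookkeeping that explains the exceptional role of multiples of $8$. The appearance of $8$ points to a genuine number-theoretic phenomenon — most likely the structure of representations by the relevant quadratic form, or coincidences forcing several lattice points to enter the count simultaneously at parameter values associated with multiples of $8$, so that those totals are leapt over rather than attained. I would need to make the dependence $b \mapsto h^{0,1}_{J_b}$ fully explicit (invoking the equality $h^{0,1}_{J_b}=h^{2,1}_{J_b}$ and Proposition \ref{sol>0} to pin down the base count and the contribution of each mode), and then prove that the multiset of jump sizes realizes every target with $8 \nmid n$. The continuity of the family and the discreteness of the count make an intermediate-value-style argument unavailable directly, so the realization must come from an exact formula for the counting function rather than a soft connectedness argument.
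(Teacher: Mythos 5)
Your opening frame (Weil--Brezin reduction, ODE blocks, a lattice-point count) does coincide with the paper's machinery, but the step carrying the actual content of the theorem --- producing, for each $n$ with $8\centernot\mid n$, a value of $b$ with $h^{0,1}_{J_b}=n$ --- rests on a mechanism that cannot work. You propose to sweep $b$ upward and track ``jumps'' of the counting function, checking that no admissible integer is skipped; this presumes the count behaves like a staircase, i.e.\ like the classical Gauss problem of counting lattice points \emph{inside} a growing disc. In fact the count is of integer points lying \emph{exactly on} the circle $(l-d)^2+m^2=d^2$ of \eqref{circlat}, $d=\tfrac{b}{8\pi}$: a nontrivial lattice point $(l,m)$, $l\ne 0$, lies on this circle for exactly one parameter value, $d=(l^2+m^2)/(2l)$, so nothing persists as $b$ varies, nothing accumulates, and there are no thresholds. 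The function $b\mapsto h^{0,1}_{J_b}$ equals $1$ away from a countable set of isolated spikes, and the height of each spike is a fresh arithmetic quantity; no sweep or intermediate-value-style bookkeeping can detect which heights occur. You concede at the end that an exact formula is needed, but you give no route to one. A second structural error: you locate the lattice count in the nonzero-mode ODE blocks (Hermite-type eigenvalue conditions), whereas for the metrics and the rational $d$ used in the realization those blocks contribute \emph{nothing}: by Theorem \ref{1stode} they admit $L^2$ solutions only when $d$ solves the quartic \eqref{d eq}, which no rational $d$ can do because $\pi$ is transcendental. The whole count comes from the $n=0$ modes, which give zeroth-order linear equations, not ODEs; Proposition \ref{sol>0}, which you invoke ``to pin down the contribution of each mode,'' applies only at the exceptional irrational $d$ solving \eqref{d eq} and plays no role in the realization.

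What is missing is the explicit number theory that the paper supplies in Theorem \ref{h01} and Corollary \ref{anyint}. For $d=p/q$ with $\gcd(p,q)=1$, clearing denominators turns \eqref{circlat} into $(ql-d')^2+(qm)^2=p^2$, and a congruence analysis mod $q$ (carried out for $q\le 5$) shows the count equals $4$, $2$, or $1$ times $\prod_j(\beta_j+1)$, where $p^2=2^{\alpha_0}\prod_i p_i^{\alpha_i}\prod_j q_j^{\beta_j}$ with $p_i\equiv 3$, $q_j\equiv 1 \pmod 4$. Since $p^2$ is a perfect square, every $\beta_j$ is even, so $\prod_j(\beta_j+1)$ is odd: this parity fact --- not simultaneous entry of lattice points at thresholds --- is the entire explanation of the exceptional role of $8$. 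Realization is then by direct construction rather than by any continuity argument: for odd $K$, the Schinzel-type choice $b=8\pi\cdot 5^{(K-1)/2}/q$ with $q=1,2,3$ gives $h^{0,1}=4K,\,2K,\,K$ respectively, and the three families $\{4K\},\{2K\},\{K\}$ with $K$ odd exhaust $\{n\in\mathbb Z^+:8\centernot\mid n\}$. Without (a) the transcendence argument killing the $n\ne 0$ modes for rational $d$, (b) the mod-$q$ reduction of the on-circle count to representations of $p^2$ as a sum of two squares, and (c) the explicit choice of $p$ a power of $5$, your outline cannot be closed.
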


Second, we mentioned that for integrable complex structures, $\mathcal H^{p, q}$ depends only on the underlying complex manifold but not on the choice of Hermitian metric. In the almost complex setting, there is the following famous question of Kodaira and Spencer which appeared as Problem 20 in Hirzebruch's 1954 problem list \cite{Hir}.

\begin{q}[Kodaira-Spencer]\label{KS}
Let $M$ be an almost complex manifold. Choose an almost Hermitian structure and consider the numbers $h^{p,q}$. Is $h^{p,q}$ independent of the choice of the almost Hermitian structure?
\end{q}
According to a recent update of Hirzebruch's problem list \cite{Kot}, there seems to have been no progress at all on this problem, besides the work \cite{Don, Ver, CW} mentioned above. For some special value of pairs, for example when $q=0$ or on compact manifolds when $q=\dim M$ (see \cite{CZ}), Question \ref{KS} was answered affirmatively. Moreover, Question \ref{KS} is among the $5$ widely open ones in the list of $34$ problems in \cite{Hir}, alongside ones like the (non-)existence of complex structures on $S^6$ and the classification of complex structures on $\mathbb CP^n$.

Using the same family of almost complex structures $J_{a,b}$ as in Theorem \ref{intro1} but changing the almost Hermitian metric, in Section \ref{Sec KS} we are able to give a negative answer to this question.

\begin{repthm}{KSKT}
There exist almost complex structures on the Kodaira-Thurston manifold such that $h^{0,1}$ varies with different choices of almost Hermitian metrics.
\end{repthm}
The method used to obtain the above results in fact offers more precise computational results. For any specific almost complex structure and almost Hermitian metric, chosen from the above mentioned family, it is possible to find the $\bar\partial$-harmonic forms explicitly. 

The method used to find these $\bar\partial$-harmonic forms is summarised in the following diagram.
\begin{center}
\begin{tikzcd}[column sep=large]
\hbox{ODE} \arrow{ddr}[swap]{\hbox{Stokes Phenomenon}}
&   &\hbox{Number Theory}\arrow{ddl}{\hbox{Gauss circle problem}}\\
                                       &\hbox{PDE}\arrow{ur} \arrow{ul}[swap]{\hbox{ Weil-Brezin transform}}&\\
                                      &\bar\partial\hbox{-harmonic forms}\arrow{u}&
\end{tikzcd}
\end{center}

First, we have an elliptic PDE system obtained from the elliptic operator $\Delta_{\bar\partial}$ and $\ker\Delta_{\bar\partial}=\ker\bar\partial\cap\ker\partial *$. In our calculation for $h^{2,0}$ and $h^{1, 0}$ in \cite{CZ}, the equations automatically kill the dependence of solutions on a couple of variables, leaving only variables over which the solutions are periodic. This prompted Haojie Chen and the second author to solve the equations using Fourier analysis. This strategy no longer works in the calculation of $h^{0,1}$ as the coefficients of our forms will depend on all the variables in general. 

However, the Fourier theory can also be made to work for non-abelian groups although it is probably not as powerful as in the abelian case at least from a computational perspective. It could be understood as decomposing function spaces with respect to irreducible unitary representations, as in the classical theorem of Peter and Weyl for compact groups.  
As the underlying group for the Kodaira-Thurston manifold is the Heisenberg group, its irreducible unitary representations are classified by the classical Stone-von Neumann theorem. Moreover, as the Kodaira-Thurston manifold is obtained from quotienting out a discrete lattice, we eventually have a theory like the classical Fourier series, where only a discrete subset of the irreducible unitary representations comes into play. This is in fact a classical theory in Harmonic Analysis, which is related to the Weil-Brezin transform. This is adapted to our setting in Section \ref{WBsec} whose motivation is explained in Section \ref{0111}.

Applying this Fourier theory for the Heisenberg group, we are able to transform our PDE into a set of countably many first order linear ODE systems and a set of countably many zeroth order linear equations. These linear ODE systems have a very fundamental form and are probably the simplest ODE systems other than the ones with constant coefficients. However, after consulting several experts, it seems that the asymptotic behaviour at infinity of these ones are not well studied. We obtain the following general result. 

\begin{repthm}{1stode}
Let $A,B \in M_{2}(\mathbb{C})$ be matrices and let $A$ have two distinct, real eigenvalues $\lambda_{1}$, $\lambda_{2}$ with $\lambda_{1}>0>\lambda_2$ then the equation
\begin{align}
    \frac{d}{dx}
    \begin{pmatrix}
        f\\
        g
    \end{pmatrix}
    = (Ax+B)
    \begin{pmatrix}
        f\\
        g
    \end{pmatrix}\label{introe2}
\end{align}
has a pair of solutions $f,g\in L^{2}(\mathbb{R})$ if and only if the following holds: Given $P\in GL(2, \mathbb C)$ such that $PAP^{-1}$ is diagonal and writing $PBP^{-1}$ as $\begin{pmatrix}
    b_{1} & b_{2}\\
    b_{3} & b_{4}
\end{pmatrix}$ we have $b_{2}b_{3}\in (\lambda_{1}-\lambda_{2})\cdot \mathbb{Z}^-$, and in this situation both $f$ and $g$ are Schwartz functions.
\end{repthm}

 We use the classical Laplace transform, although it cannot be applied directly to \eqref{introe2} or its corresponding second order ODEs for $f$ and $g$. The key trick is a transformation changing the coefficient matrix $A$ to a simpler form.  

Our argument could be used to obtain the full set of data for Stokes phenomenon of \eqref{introe2}. In fact, our argument also applies when $A$ has two (maybe equal) complex eigenvalues $\lambda_1, \lambda_2$ with $\Re\lambda_1>0$. However, Theorem \ref{1stode} has a neat form and is sufficient for the applications in this paper. 

Theorem \ref{1stode} solves the countably many first order ODE systems. In particular, for most cases they do not have any solution. Then our search for harmonic forms is reduced to the zeroth order linear equations. Surprisingly, the solutions correspond to the integer lattice points on a circle passing through the origin in the real plane. In other words, it is essentially a generalisation of the classical Gauss circle problem. And this number theoretical counting leads to a computation of the Hodge numbers. 

\begin{repthm}{h01}
For the family of almost complex structures $J_{a, b}$, $b\neq 0$, and the standard almost K\"ahler metrics on the Kodaira-Thurston manifold, whenever $d:= \frac{b}{8\pi} \in \mathbb{Q}$, the Hodge number $h^{0,1}$ is equal to the number of integer pairs $(l,m)$ solving the generalised Gauss circle problem
$$(l-d)^{2}+m^{2}=d^{2}.$$

Furthermore, if $d =  \frac pq$, with $\gcd(p,q)=1$ and $q \le 5$, we have 
$$
h^{0,1}=\begin{cases} \begin{array}{cll}
4(\beta_{1}+1)(\beta_{2}+1)\dots (\beta_{t}+1)& \hbox{ if $q=1$},\\
2(\beta_{1}+1)(\beta_{2}+1)\dots (\beta_{t}+1)& \hbox{ if $q=2$},\\
(\beta_{1}+1)(\beta_{2}+1)\dots (\beta_{t}+1)& \hbox{ if $q=3$},\\
(\beta_{1}+1)(\beta_{2}+1)\dots (\beta_{t}+1)& \hbox{ if $q=4$},\\
(\beta_{1}+1)(\beta_{2}+1)\dots (\beta_{t}+1)& \hbox{ if $q=5$}.
\end{array}
\end{cases}
$$
where $p^2$ has the prime factorisation $p^2= 2^{\alpha_{0}}p_1^{\alpha_{1}}\dots p_s^{\alpha_{s}}q_1^{\beta_{1}}\dots q_t^{\beta_{t}}$ with $p_{i}\equiv 3 \mod 4$ for all $i$ and $q_{j}\equiv 1 \mod 4$ for all $j$.
\end{repthm}

This PDE-ODE-NT method to calculate $h^{0.1}$ should be very useful in studying Hodge theory and more generally solving PDEs on almost complex manifolds. 
First, as we do not assume any symmetry of almost complex structures, it also works for other almost complex structures on the Kodaira-Thurston manifolds or more general nilmanifolds where we have Kirillov theory on irreducible unitary representations of nilpotent Lie groups. 
Moreover, it could also be used to compute the space of bundle valued harmonic forms where theta functions will be the coefficients of such forms in terms of a smooth basis. This may explain the appearance of lattice point counting from a number theory perspective as the Jacobi theta function $\vartheta_3^2(q)$  is the generating function of the classical counting of square sums. The method may also work for solving PDEs on compact quotients of some other Lie groups by lattice subgroups, or more generally geometric manifolds {\it \`a la} Thurston. 

Finally, to complete the list of Hodge numbers for the Kodaira-Thurston manifold, we need to compute $h^{1,1}$. We show in Proposition \ref{akh11} that $h^{1,1}$ is a topological invariant for any almost complex structure with any almost K\"ahler metric. In particular, applying this to our $J_b$, we have $h^{1,1}=3$. 

\subsection*{Acknowledgement} We would like to thank Haojie Chen for suggesting that we study the Kodaira-Spencer question using equations similar to \eqref{V1} and \eqref{V2}. This  greatly motivates us to solve these equations explicitly. We are grateful to Mario Micallef and Christoph Ortner for  stimulating discussions when we are baffled by the countably many linear ODE systems. These discussions gradually made it clear that we should aim to prove a result like Theorem \ref{1stode}. Finally, we thank the referee for helpful suggestions which improve the presentation of the paper.

\section{The Kodaira-Thurston Manifold}\label{KTM}
We first recall the definition of the Kodaira-Thurston manifold and define a family of non-integrable almost complex structures on it. 

The Kodaira-Thurston manifold $\KT$ is defined to be the direct product $S^{1}\times (H_{3}(\mathbb{Z})\backslash H_{3}(\mathbb{R}))$, where $H_{3}(\mathbb{R})$ denotes the Heisenberg group
$$H_{3}(\mathbb{R}) = \left\{ \begin{pmatrix}
    1 & x & z\\
    0 & 1 & y\\
    0 & 0 & 1
\end{pmatrix}\in GL(3,\mathbb{R}) \right\},$$
and $H_{3}(\mathbb{Z})$ is the subgroup  $H_3(\mathbb R)\cap GL(3,\mathbb{Z})$ acting on $H_{3}(\mathbb{R})$ by left multiplication. We call and $H_{3}(\mathbb{Z})\backslash H_{3}(\mathbb{R})$ the Heisenberg manifold. It is also useful to consider the covering of this manifold by $\mathbb{R}^{4}$ given by identifying points with the relation
\begin{equation}\begin{pmatrix}
    t\\
    x\\
    y\\
    z
\end{pmatrix}\sim
\begin{pmatrix}
    t+t_{0}\\
    x+x_{0}\\
    y+y_{0}\\
    z+z_{0}+x_{0}y
\end{pmatrix}, \label{quot}\end{equation}
for every choice of integers $t_{0},x_{0},y_{0},z_{0}\in \mathbb{Z}$. 

It should be noted that $\der{y}$ is not a well-defined smooth vector field. Instead we can use vector fields $\der{t}, \der{x}, \der{y}+x\der{z}$ and $\der{z}$, which \textit{are} well defined, to form a basis at each point. In this paper, we will consider a family of non-integrable almost complex structures given by the matrix
$$J_{a,b} = \begin{pmatrix}
        0 & -1 & 0 &  0\\
        1 &  0 & 0 &  0\\
        0 &  0 & a &  b\\
        0 &  0 & c &  -a
        \end{pmatrix},$$
acting on our basis, with $a,b \in \mathbb{R}$, $b\neq 0$ and $c = -\frac{a^{2}+1}{b}$.
We can then define the vector fields
$$V_{1} = \frac 12 \left(\der{t}-i\der{x}\right)\quad \mathrm{\&}\quad V_{2} = \frac 12 \left(\left(\der{y}+x\der{z}\right)-\frac{a-i}{b}\der{z}\right), $$
spanning $T^{1,0}_{x}M$ at every point, along with their dual 1-forms
$$\phi_{1} = \dif{t}+i\dif{x}\quad\mathrm{\&}\quad \phi_{2} = (1-ai)\dif{y}-ib(\dif{z}-x\dif{y}). $$
These 1-forms satisfy the structure equations
$$d \phi_1 = 0, $$
$$d \phi_2 = \frac b4\left(\phi_1\wedge\phi_2 + \phi_1 \wedge \bar\phi_2 + \phi_2 \wedge\bar\phi_1 - \bar\phi_1 \wedge\bar\phi_2\right). $$
Notice that $d\phi_2$ has a component with bidegree $(0,2)$. 
This demonstrates that the exterior derivative cannot be written as the sum $d = \partial + \bar\partial$ and therefore the almost complex structure $J_{a,b}$ is non-integrable for all $a\in\mathbb{R}$, $b\in \mathbb{R}\setminus \{0\}$.

\subsection{The almost complex $\bar\partial$-Hodge numbers for the Kodaira-Thurston manifold}\label{hodge}
We will compute the spaces $\mathcal H^{p, q}$ of $\bar\partial$-harmonic forms on $\KT$ and their dimension $h^{p,q}$. 
We start our computation on the metric for which $V_1, V_2, \bar V_1$ and $\bar V_2$ form an orthonormal basis. We will call this the \textit{standard orthonormal metric} with respect to $J_{a,b}$. In other words, for any $b\ne 0$, the almost complex structure $J_{a, b}$ is almost K\"ahler and a compatible symplectic structure is  $\frac{i}{2}(\phi_1\wedge\bar\phi_1+\phi_2\wedge\bar\phi_2)=dt\wedge dx+bdz\wedge dy$.

Since the Kodaira-Thurston manifold is compact, by Serre duality \cite{CZ} we have the symmetry $h^{p,q}=h^{2-p,2-q}$. Hence, we only need to compute $h^{1,0}$, $h^{2, 0}$, $h^{0,1}$ and $h^{1,1}$. Specifically, Serre duality gives us $\mathcal{H}^{p,q} = *\overline{\mathcal{H}^{2-p,2-q}}$, so we can even determine the spaces $\mathcal{H}^{p,q}$ by studying only four.

\subsubsection{$\boldsymbol{h^{1,0}}$ and $\boldsymbol{h^{2,0}}$}
In \cite{CZ} Haojie Chen and the second author found that, when considering the almost complex structure $J_{a,b}$ with $a=0$ alongside the standard orthonormal metric, $h^{1,0}$ and $h^{2,0}$ are given by
$$h^{2,0} =
\begin{cases}
    1 & b \in 4\pi \mathbb{Z}, b\neq 0\\
    0 & b \not\in 4\pi \mathbb{Z}
\end{cases},$$
$$h^{1,0}=1.$$
In fact, the argument used to prove this is unchanged when $a$ is non-zero, allowing us to determine the value of $h^{2,0}$ and $h^{1,0}$ for all $a \in \mathbb{R}$, yielding the same results as above.

Moreover, it was shown in \cite{CZ} that on any almost complex manifold, $h^{2,0}$ and $h^{1,0}$ are both independent of the almost Hermitian metric used to define $\Delta_{\bar\partial}$.

\subsubsection{$\boldsymbol{h^{0,1}}$ and  $\boldsymbol{h^{1,1}}$}\label{0111}

Now, to calculate $h^{0,1}$ we first look at a general smooth $(0,1)$-form $s = f\bar{\phi}_{1}+g\bar{\phi}_{2}$ with $f,g\in C^{\infty}(\KT)$.
Requiring that $\bar{\partial}s = 0$ gives us our first condition
\begin{align*}
    \bar{\partial}(f\bar{\phi}_{1}+g\bar{\phi}_{2})&= \bar{\partial}(f)\wedge \bar{\phi}_{1}+\bar{\partial}(g)\wedge \bar{\phi}_{2}+g\bar \partial(\bar{\phi}_{2}) \\
    &= \left(-\bar{V}_{2}(f)+\bar{V}_{1}(g)+g\frac b4\right)\bar{\phi}_{1}\wedge\bar{\phi}_{2}=0.
\end{align*}
Similarly requiring that $\partial *s = 0$ gives us the second
\begin{align*}
    \partial*(f\bar{\phi}_{1}+g\bar{\phi}_{2}) &= \partial(f\phi_{2}\wedge\bar{\phi}_{1}\wedge\bar{\phi}_{2}-g\phi_{1}\wedge\bar{\phi}_{1}\wedge\bar{\phi}_{2})\\
    &= \left(V_{1}(f)+f\frac b4 - f \frac b4 +V_{2}(g)\right)\phi_{1}\wedge\phi_{2}\wedge\bar{\phi}_{1}\wedge\bar{\phi}_{2}\\
    &= \left(V_{1}(f)+V_{2}(g)\right)\phi_{1}\wedge\phi_{2}\wedge\bar{\phi}_{1}\wedge\bar{\phi}_{2}=0.
\end{align*}

So, we have shown that a $(0,1)$-form is in the kernel of $\Delta_{\bar{\partial}}$ exactly when $f$ and $g$ satisfy
\begin{align}
    -\bar{V}_{2}(f)+\bar{V}_{1}(g)+g\frac b4&=0,\label{V1}\\
    V_{1}(f)+V_{2}(g)&=0.\label{V2}
\end{align}

This elliptic system is much harder to solve directly. We are not able to use the classical Fourier series as in the computation of $\mathcal H_{\bar\partial}^{2,0}$ in \cite{CZ} as $f$ and $g$ will depend on all the variables in general. 
%In order to solve this system, we are required to make use of a decomposition of functions on the Kodaira-Thurston manifold, derived from an application of Fourier theory to the underlying Heisenberg group. In the next section we give a description of this decomposition and show how it can be used to compute $\mathcal{H}_{\bar\partial}^{1,0}$.
However, theoretically every locally compact group has a Fourier theory since its essence is to decompose Hilbert function spaces with respect to irreducible unitary representations which play the role of characters in the abelian group case. In our situation, the Kodaira-Thurston manifold is derived from the Heisenberg group, whose irreducible unitary representations are classified by the Stone-von Neumann theorem, which says any irreducible unitary representation that is non-trivial on the center of the Heisenberg group is unitarily equivalent to one of the classical representations $\rho_h$ on $L^2(\mathbb R)$ parametrised by non-zero real numbers. 

However, the Kodaira-Thurston manifold is obtained by quotienting out the $\mathbb Z$ lattice. In other words, our functions on the Heisenberg group (times a circle) are periodic with respect to the integral subgroup. Comparing this to the classical Fourier theory, the process is like going from the Fourier transform  (the $\mathbb R$ to $\mathbb R$ Fourier theory) where the characters are $e^{itx}$ to the Fourier series (the $S^1$ to $(S^1)^{\vee}=\mathbb Z$ Fourier theory) where the characters will be parametrised by a discrete subgroup $\mathbb Z$ of $\mathbb R$. This process in the Heisenberg group setting is a classical topic in harmonic analysis (see Chapter $1$, in particular Section $10$, of \cite{Fol} for a nice introduction). However, we are not aware of any practical use of this strategy in solving PDEs. In the next section, we will adapt this classical theory to our setting and reduce the PDE system \eqref{V1} and \eqref{V2} to countably many ODEs.

We will leave the discussion of $h^{1,1}$ to Section \ref{h11} where we will see in Proposition \ref{akh11} that for $J_{a,b}$ with any compatible almost K\"ahler metric, $h^{1,1}=3$.

\section{Fourier Transform on the Kodaira-Thurston Manifold}
In this section, we introduce a method to solve the PDE system \eqref{V1} and \eqref{V2}. We use harmonic analysis for the Heisenberg group to reduce the PDE system to a set of countably many ODE systems and a set of countably many systems of zeroth order linear equations. In Theorem \ref{1stode} we solve the ODE systems and then in Section \ref{n=0NT} we reduce the zeroth order linear equations to a generalised version of the Gauss circle problem.

\subsection{Decomposing functions using the Weil-Brezin transform}\label{WBsec}

In this subsection we will describe a decomposition of $L^2(\KT)$ derived from a similar decomposition on the Heisenberg manifold.  

\begin{prop}\label{propdecomp}
  The space of $L^2$ functions on the Kodaira-Thurston manifold decomposes in the following way.
  \begin{equation}\label{decomp}
      L^2(\KT) = \left(\widehat{\bigoplus_{\substack{k,m,n \in \mathbb{Z} \\ n\neq 0 \\ 0\leq m< \abs{n}}}}\mathcal{H}^{k,m,n}\right) \oplus  \left(\widehat{\bigoplus_{k,l,m\in \mathbb{Z}}}\mathcal{H}^{k, l,m,0}\right),
  \end{equation}
  where
  $$\mathcal H^{k, m, n}=\left\{ \sum_{\xi\in \mathbb{Z}}F(x+\xi)e^{2\pi i(kt+(m+n\xi)y+nz)} \,\middle|\,F\in L^2(\mathbb{R})\right\}\subset L^2(\KT), $$
  and
  $$\mathcal{H}^{k,l,m,0} := \left\{G e^{2\pi i (kt+lx+my)}\,\middle|\,G\in \mathbb{C}\right\} \subset L^2(\KT). $$
  Here $\hat \oplus$ denotes the direct sum followed by the closure with respect to the $L^2$ norm.
  %In particular, a function is zero if and only if all of its components in $\mathcal{H}^{k,m,n}$ and $\mathcal{H}^{k,l,m,0}$ are zero.
  \begin{proof}
  A decomposition of functions on the Heisenberg manifold already exists, (see \textit{e.g.} Section I.5 of \cite{Aus} and Section 1.10 of \cite{Fol}), allowing us to write any $f \in L^2\left(H_3(\mathbb{Z})\setminus H_3(\mathbb{R})\right)$ as
 $$f(x,y,z) = \sum_{\substack{m,n \in \mathbb{Z}\\ n \neq 0 \\ 0 \leq m < \abs{n}}}\left( \sum_{\xi \in \mathbb{Z}} F_{m,n}(x + \xi) e^{2\pi i ((m+n\xi)y +nz)}\right) + \sum_{l,m\in \mathbb{Z}} G_{l,m} e^{2\pi i (lx+my)}, $$
 with $F_{m,n}(x) \in L^2(\mathbb{R})$ and $G_{l,m} \in \mathbb{C}$ determined uniquely.
 
 Recall that $\KT := S^1 \times \left(H_3(\mathbb{Z})\setminus H_3(\mathbb{R})\right)$. Therefore, given any $f \in L^2(\KT)$, we can take a standard Fourier expansion with respect to $t$ (the variable parametrising $S^1$) to write
 $$f(t,x,y,z) = \sum_{k\in \mathbb{Z}} f_k(x,y,z) e^{2\pi i kt}, $$
 where $f_k \in L^2\left(H_3(\mathbb{Z})\setminus H_3(\mathbb{R})\right)$. The desired result can then be obtained by further expanding $f_k$ in the manner described above. \end{proof}
 \end{prop}
 On the Heisenberg manifold, the Weil-Brezin transform \cite{Aus, Fol} is a map acting on square integrable functions $F\in L^2(\mathbb{R})$ as follows
 $$F(x) \mapsto \sum_{\xi \in \mathbb{Z}}F(x+\xi)e^{2\pi i ((m+n\xi)y+nz)}. $$
 
 Analogously, on the Kodaira-Thurston manifold we have the transform
 \begin{align*}
      W_{k,m,n}: L^2(\mathbb{R}) & \rightarrow L^2(\KT)\\
      F(x) &\mapsto  \sum_{\xi \in \mathbb{Z}}F(x+\xi)e^{2\pi i (kt+ (m+n\xi)y+nz)}.
 \end{align*}
 Later in this section we will be applying the above decomposition to smooth functions. We would therefore like to know how to characterise the spaces $\mathcal{H}_{k,m,n}:= \mathcal{H}^{k,m,n}\cap C^{\infty}(\KT)$ and $\mathcal{H}_{k,l,m,0}:= \mathcal{H}^{k,l,m,0}\cap C^{\infty}(\KT)$. Clearly $\mathcal{H}_{k,l,m,0}= \mathcal{H}^{k,l,m,0}$ and the case of $\mathcal{H}_{k,m,n}$ is given in the following proposition.
 \begin{prop}\label{Sch}
 The space $\mathcal{H}_{k,m,n}:= \mathcal{H}^{k,m,n}\cap C^{\infty}(\KT)$ is given by
 $$\mathcal H_{k, m, n}=\left\{ \sum_{\xi\in \mathbb{Z}}F(x+\xi)e^{2\pi i(kt+(m+n\xi)y+nz)} \,\middle|\,F\in \mathcal{S}^2(\mathbb{R})\right\}, $$
 where $\mathcal{S}(\mathbb{R})$ is the space of Schwartzian functions
$$\mathcal{S}(\mathbb{R}) = \left\{F(x)\in C^\infty(\mathbb{R}) \,: \, \sup_{x\in\mathbb{R}} \abs{x^p \frac{d^q}{dx^q}F(x)}<\infty, \text{ for all } p,q \in \mathbb{N}  \right\}. $$
 \begin{proof}
  It is equivalent to show that the transform $W_{k,m,n}$, when restricted to $\mathcal{S}(\mathbb{R})$, yields a bijection  
  $$W_{k,m,n}:\mathcal{S}(\mathbb{R})\rightarrow \mathcal{H}_{k,m,n}. $$
  To see that this is the case, let $F(x) \in L^2(\mathbb{R})$ be a function such that $W_{k,m,n}(F) \in C^{\infty}(\KT)$. Then, note that the derivatives $\frac{\partial^q}{\partial x^q}W_{k,m,n}(F)$ each describe a Fourier series, with coefficients given by $\frac{d^q}{dx^q}
  F(x+\xi)$.
  The proposition then follows from the classical result that a function is smooth if and only if its Fourier coefficients tend to zero faster than any inverse polynomial. 
 \end{proof}
 \end{prop}

It is a simple matter to check that the spaces $\mathcal{H}_{k,m,n}$ and $\mathcal{H}_{k,l,m,0}$ are all invariant with respect to $\der{t}, \der{x}, \der{y}+x\der{z}$ and $\der{z}$.
This means that, if a linear PDE on $\KT$ is constructed using only these four derivatives, we can use the above decomposition to write the smooth solutions as a sum of functions which also solve the PDE. By Proposition \ref{propdecomp}, a smooth function is zero if and only if its components in $\mathcal{H}_{k,m,n}$ and $\mathcal{H}_{k,l,m,0}$ are all zero. Thus, in order to find functions $f$ and $g$ satisfying the two conditions \eqref{V1} and \eqref{V2}, it makes sense to consider solutions in $\mathcal{H}_{k,m,n}$ with $n\neq 0$ separately from solutions in $\mathcal{H}_{k,l,m,0}$. These two cases will be dealt with in the next two subsections.

\begin{rmk}
From a representation theory perspective, $L^2(\KT)$ corresponds to ind$_{\Gamma}^G(1)$ where $\Gamma=\mathbb Z\times H_3(\mathbb Z)$ and $G=\mathbb R\times H_3(\mathbb R)$, and each $\mathcal H^{k, m, n}$ corresponds to the representation $\rho_n$ of the Heisenberg group. As $m$ takes integer values between $0$ and $\abs{n}-1$, the function space $\mathcal H^{k,n} := \widehat{\bigoplus}_{m}\mathcal{H}^{k,m,n}$ corresponds to $|n|$ multiples of the irreducible unitary representation $\rho_n$ (see {\it e.g.}  \cite{Aus} and Theorem 1.109 in \cite{Fol}). This is generalised to the Howe-Richardson multiplicity formula for compact nilmanifolds. Moreover, each $\mathcal{H}_{k,l,m,0}$ corresponds to the irreducible unitary representation $\sigma_{lm}$ of the Heisenberg group in classical notation ({\it e.g.} Theorem 1.59 in \cite{Fol}). %In other words, all the direct summands in \eqref{decomp} are exactly all the irreducible unitary representations of the Heisenberg group by virtue of the Stone-von Neumann theorem.
\end{rmk}

\begin{rmk}
Before diving into the detailed calculations, we remark that the above mentioned $L^2$ decomposition could be used to study almost complex theta function valued harmonic $(p, q)$-forms, {\it i.e.} the space $\mathcal H_{\bar\partial_E}^{p, q}(\KT, E)$ for a non-trivial complex vector bundle $E$ with a pseudoholomorphic structure over $\KT$ with a compatible almost Hermitian metric.  Similar to classical theta functions over an abelian variety, an element of $\mathcal H_{\bar\partial_E}^{p, q}(X, E)$ should be viewed as a vector valued function over the universal covering of $\KT$, {\it i.e.} $\mathbb R^4$, satisfying an elliptic system induced from the $\bar\partial_E$-harmonic form equations on $\KT$.
\end{rmk}

\subsection{Solving the $n\ne 0$ case with Laplace Integral Transforms}
Solutions in $\mathcal{H}_{k,m,n}$ with fixed $n\neq0$ and $0\leq m<\abs{n}$, take the form of
\begin{align}
    f = \sum_{\xi\in \mathbb{Z}}F_{k,m,n}(x+\xi)e^{2\pi i(kt+(m+n\xi)y+nz)},\label{odefKT}\\
    g = \sum_{\xi\in \mathbb{Z}}G_{k,m,n}(x+\xi)e^{2\pi i(kt+(m+n\xi)y+nz)}\label{odegKT}.
\end{align}
Plugging these into \eqref{V1} and \eqref{V2} then taking the Fourier expansion with respect to $t,y$ and $z$ we obtain an ODE system on the whole of $\mathbb{R}$.
\begin{align}
    \frac{d}{dx}
    \begin{pmatrix}
        F_{k,m,n}\\
        G_{k,m,n}
    \end{pmatrix}
    = (A_{n}x+B_{k,m,n})
    \begin{pmatrix}
        F_{k,m,n}\\
        G_{k,m,n}
    \end{pmatrix}\label{Main DE}
\end{align}
with
$$ A_{n} = 2\pi\begin{pmatrix}
        0 & n\\
        n & 0\\
        \end{pmatrix},$$
$$ B_{k,m,n} = 2\pi\begin{pmatrix}
        k                   & m-n\frac{a-i}{b}\\
        m-n\frac{a+i}{b}    & \frac{b}{4\pi}i-k\\
        \end{pmatrix}.$$
For every choice of $k,m$ and $n$, this ODE system has two independent smooth solutions. However, as we saw in Prop. \ref{Sch}, we require that $F_{k,m,n},G_{k,m,n} \in \mathcal{H}_{k,m,n}$ be Schwartz functions. Indeed, this Schwartzian condition must rule out the vast majority of smooth solutions, as the elliptic system given by \eqref{V1} and \eqref{V2} can only have a finite number of solutions. 

The coefficients of the ODE system \eqref{Main DE} are analytic in $\mathbb R$ (in fact, in $\mathbb C$), and it has an irregular singularity ({\it i.e.} essential singularity) of order two at infinity.  By standard ODE theory ({\it e.g.} Chapters $3$ and $5$ in \cite{cod}), there are two linearly independent analytic solutions of  \eqref{Main DE}. If we consider the fundamental matrices of the ODE systems at both positive and negative infinities,  they are of the form $e^{Q_0x^2+Q_1x}x^aP(x^{-1})$, where $P(x^{-1})$ is a formal power series in $x^{-1}$ and $Q_0$ is the diagonal matrix $diag(\pi n, -\pi n)$.

Hence, as $x\rightarrow +\infty$ in \eqref{Main DE} we have two independent local solutions, one that grows like $e^{|n|\pi x^{2}}$ and one that decays like $e^{-\abs{n}\pi x^{2}}$, and likewise for large negative $x$. If we have a single solution that decays in both directions then it must be Schwartzian, though we may instead have two independent solutions that both blow up at one end while decaying at the other. In theory, both of these cases are possible. If $B_{k,m,n}=0$ then clearly we would have a Schwartzian solution. However, since the elliptic system \eqref{V1} and \eqref{V2} cannot have infinitely many solutions, for nearly all values of $k,m$ and $n$, no Schwartzian solution exists. 

A complete description of when each of these two cases occur is given below in a more general setting, which should have an independent interest in the ODE theory.  
\begin{thm}\label{1stode}
Let $A,B \in M_{2}(\mathbb{C})$ be matrices and let $A$ have two distinct, real eigenvalues $\lambda_{1}$, $\lambda_{2}$ with $\lambda_{1}>0>\lambda_2$ then the equation
\begin{align}
    \frac{d}{dx}
    \begin{pmatrix}
        f\\
        g
    \end{pmatrix}
    = (Ax+B)
    \begin{pmatrix}
        f\\
        g
    \end{pmatrix}\label{Ax+B}
\end{align}
has a pair of solutions $f,g\in L^{2}(\mathbb{R})$ if and only if the following holds: Given $P\in GL(2, \mathbb C)$ such that $PAP^{-1}$ is diagonal and writing $PBP^{-1}$ as $\begin{pmatrix}
    b_{1} & b_{2}\\
    b_{3} & b_{4}
\end{pmatrix}$ we have $b_{2}b_{3}\in (\lambda_{1}-\lambda_{2})\cdot \mathbb{Z}^-$, and in this situation both $f$ and $g$ are Schwartz functions.
\end{thm}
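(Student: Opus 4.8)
The plan is to reduce \eqref{Ax+B} to a single scalar second-order equation of parabolic-cylinder (Weber) type, for which the existence of a two-sided decaying solution is governed by the classical eigenvalue quantization of the quantum harmonic oscillator. First I would conjugate by the constant matrix $P$: setting $\binom{F}{G}=P\binom{f}{g}$ turns \eqref{Ax+B} into the same type of system with $A$ replaced by $\operatorname{diag}(\lambda_1,\lambda_2)$ and $B$ by $PBP^{-1}=\begin{pmatrix}b_1&b_2\\b_3&b_4\end{pmatrix}$. Since $P$ is invertible and constant, membership in $L^2$ (and in the Schwartz class) is unaffected, so we may assume $A$ diagonal. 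I would also record that any two diagonalizing $P$ differ by a diagonal matrix — and, as the eigenvalues are distinct and sign-ordered, by no permutation — so the product $b_2b_3$ is an invariant; this is what makes the stated condition well posed.

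Next comes the key normalization. Writing $\binom{f}{g}=e^{cx^2+dx}\binom{u}{v}$ shifts the linear coefficient by a scalar, replacing $A$ by $A-2cI$ and $B$ by $B-dI$. Choosing $c=\tfrac14(\lambda_1+\lambda_2)$ makes the diagonal part traceless with eigenvalues $\pm\mu$, $\mu=\tfrac12(\lambda_1-\lambda_2)>0$; choosing $d$ and then translating $x$ removes the diagonal entries of the new $B$, leaving the simple form $u'=\mu x\,u+b_2v$, $v'=b_3u-\mu x\,v$. The point of this calibration is that the surviving solution decays like $e^{\lambda_2x^2/2}$: comparing the two Gaussian growth rates $e^{\pm\mu x^2/2}$ of the simplified system against the weight $e^{cx^2}$ (note $c-\tfrac{\mu}{2}=\tfrac{\lambda_2}{2}<0$ and $c+\tfrac{\mu}{2}=\tfrac{\lambda_1}{2}>0$), an $L^2$ solution of \eqref{Ax+B} corresponds exactly to a solution of the simplified system that is recessive at both $\pm\infty$. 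Assuming $b_2b_3\neq0$, I would then eliminate $v=b_2^{-1}(u'-\mu x\,u)$ to obtain the scalar equation $u''-(\mu^2x^2+\mu+b_2b_3)u=0$, and rescale by $t=\sqrt{2\mu}\,x$ to reach the standard form $u_{tt}-(\tfrac14t^2+\nu)u=0$ with $\nu=\tfrac12+\tfrac{b_2b_3}{2\mu}$.

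The classical theory of this equation — equivalently, the bound-state problem for $-\psi''+\tfrac14t^2\psi$, whose eigenvalues are $n+\tfrac12$ with Hermite-function eigenstates $e^{-t^2/4}H_n(t/\sqrt2)$ — says it admits a solution decaying at both ends, necessarily Schwartz, precisely when $\nu=-(n+\tfrac12)$ for some integer $n\ge0$. Substituting $\nu=\tfrac12+b_2b_3/2\mu$ gives $b_2b_3=-(\lambda_1-\lambda_2)(n+1)$, i.e.\ $b_2b_3\in(\lambda_1-\lambda_2)\cdot\mathbb Z^-$, and unwinding the gauge (multiplying the polynomial-times-$e^{-\mu x^2/2}$ solution by $e^{cx^2+dx}$ with $c-\tfrac{\mu}{2}<0$) shows the resulting $f,g$ are Schwartz, as claimed. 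Equivalently — and this is the route that also extracts the full Stokes data promised in the introduction — one applies the Laplace transform, which maps this class of systems to itself by turning multiplication by $x$ into differentiation, after the normalization above has tamed the super-exponential growth enough for the contour integrals to make sense.

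The main obstacle is the two-sided connection problem concealed in the phrase ``recessive at both ends.'' Local analysis at each infinity produces only a one-dimensional space of subdominant solutions; whether the subdominant solution at $+\infty$ is proportional to the one at $-\infty$ is a global question, and it is exactly here that a genuine computation is unavoidable — either through the analytic continuation of the parabolic-cylinder/confluent-hypergeometric solution, with the quantization emerging as the poles of $1/\Gamma$, or through the explicit Stokes multiplier read off from the Laplace representation. Everything else is bookkeeping. Finally I would dispose of the degenerate cases: if $b_2=0$ or $b_3=0$ the system is triangular and integrates in closed form, so these must be checked directly against the stated condition rather than folded into the parabolic-cylinder argument.
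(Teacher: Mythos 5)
Your main line of argument is correct, and it is a genuinely different proof from the one in the paper. The paper gauges by $e^{-\frac{1}{2}\lambda_{2}x^{2}}P$, so that the eigenvalues of the linear coefficient become $(\lambda_{1}-\lambda_{2},0)$ rather than your symmetric pair $\pm\mu$; it then passes to the scalar equations \eqref{psi} and \eqref{phi}, which still carry first-order terms, represents their solutions by Laplace contour integrals, and reads the $e^{\pm\frac{1}{2}(\lambda_{1}-\lambda_{2})x^{2}}$ asymptotics off the motion of the contours $C_{1},C_{2}$ (resp.\ $C_{3},C_{4}$); the quantization $\frac{b_{2}b_{3}}{\lambda_{1}-\lambda_{2}}\in\mathbb{Z}^{-}$ emerges when the branch point of the integrand becomes a pole and the two contours collapse onto a loop around it. You instead normalize to the traceless, diagonal-free form $u'=\mu xu+b_{2}v$, $v'=b_{3}u-\mu xv$ (the paper never removes $b_{1},b_{4}$; they persist into its integrals), eliminate $v$ to get a Weber equation with no first-order term, and invoke the classical fact that $-\frac{d^{2}}{dx^{2}}+\mu^{2}x^{2}$ has spectrum $\{\mu(2n+1):n\geq 0\}$ with Hermite eigenfunctions; this yields $-(\mu+b_{2}b_{3})=\mu(2n+1)$, i.e.\ $b_{2}b_{3}=-(\lambda_{1}-\lambda_{2})(n+1)$, exactly the stated condition, and your bookkeeping $c+\tfrac{\mu}{2}=\tfrac{\lambda_{1}}{2}$, $c-\tfrac{\mu}{2}=\tfrac{\lambda_{2}}{2}$ correctly converts two-sided recessiveness into $L^{2}$ (indeed Schwartz) decay for \eqref{Ax+B}. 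The trade-off: your route delegates the two-sided connection problem, which you rightly identify as the crux, to standard oscillator/parabolic-cylinder theory, so it is shorter and contour-free; the paper's contour analysis is self-contained and produces the Stokes data as a by-product, which the authors want for purposes beyond this theorem. Your observation that $b_{2}b_{3}$ is independent of the choice of $P$ is also worth keeping, since the paper leaves this implicit.

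The only place you stop short is the triangular case, and you should be aware that the direct check you defer does not come out in favour of the statement. Note first that your elimination needs only $b_{2}\neq 0$, so the genuinely degenerate case is $b_{2}=0$ (the upper-right entry, in the ordering $PAP^{-1}=\operatorname{diag}(\lambda_{1},\lambda_{2})$ with $\lambda_{1}>0$ first). In that case, working in the frame where $A$ is diagonal, the $\lambda_{1}$-component decouples and must vanish for integrability, after which the $\lambda_{2}$-component is unconstrained: $(F,G)=\bigl(0,\,Ce^{\frac{1}{2}\lambda_{2}x^{2}+b_{4}x}\bigr)$ is a nonzero Schwartz solution even though $b_{2}b_{3}=0\notin(\lambda_{1}-\lambda_{2})\cdot\mathbb{Z}^{-}$. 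The simplest instance is $A=\bigl(\begin{smallmatrix}0&1\\1&0\end{smallmatrix}\bigr)$, $B=0$, for which $f=e^{-x^{2}/2}$, $g=-e^{-x^{2}/2}$ solves \eqref{Ax+B} with both components Schwartz. So the theorem as literally stated is false when $b_{2}=0$; the correct criterion is ``$b_{2}=0$ or $b_{2}b_{3}\in(\lambda_{1}-\lambda_{2})\cdot\mathbb{Z}^{-}$''. (The mirror case $b_{3}=0\neq b_{2}$ is harmless: there the forced equation for the $\lambda_{1}$-component carries a nonvanishing Gaussian-integral obstruction, so no $L^{2}$ solution exists, consistent with the criterion; in particular the truth is not a function of $b_{2}b_{3}$ alone.) Flagging this case already makes you more careful than the paper, whose proof tacitly assumes $b_{2},b_{3}\neq 0$ in passing from \eqref{diagode} to \eqref{psi} and \eqref{phi}, and whose final contour argument (``no $L^{2}$ solutions when $\frac{b_{2}b_{3}}{\lambda_{1}-\lambda_{2}}+1\in\mathbb{Z}^{+}$'') misses the system solution $(0,\phi)$ precisely because it is invisible to the $\psi$-equation. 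None of this affects the applications in the paper, where $b_{2}=0$ would force $b^{2}=-8\pi|n|<0$ and hence never occurs; but your write-up should prove the corrected statement rather than the stated one.
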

Clearly if $\lambda_{1}$ and $\lambda_{2}$ are both positive then all pairs of solutions $f,g$ will blow up in both the positive and negative directions, while if they are both negative all pairs $f,g$ will decay in both directions. Note also that the arguments we use below still apply when $A$ has two complex eigenvalues $\lambda_1, \lambda_2$ with $\Re\lambda_1>0>\Re\lambda_{2}$. Here however we will restrict our attention to the real situation to simplify the notation and also because it is sufficient for all our applications in this paper.

\begin{proof}
If we write down the second order ODE satisfied by $f$ or $g$, the coefficients will involve a third order polynomial of $x$, and there is no efficient method known to study these types of equations. The trick is here is to simplify the above equation \eqref{Ax+B} slightly by left-multiplying the solution by $P$ and adding an $e^{-\frac{1}{2}\lambda_{2}x^{2}}$ term inside the derivative. 
This replaces $A$ with a matrix with only one non-zero entry, such that our equation becomes
\begin{align} \label{diagode}
    \frac{d}{dx}
    \begin{pmatrix}
        \psi\\
        \phi
    \end{pmatrix}
    = \left(\begin{pmatrix}
        \lambda_{1}-\lambda_{2} & 0\\
        0 & 0
    \end{pmatrix}x+PBP^{-1}\right)
    \begin{pmatrix}
        \psi\\
        \phi
    \end{pmatrix},
\end{align}
where
$$    \begin{pmatrix}
        \psi\\
        \phi
    \end{pmatrix}=  e^{-\frac{1}{2}\lambda_{2}x^{2}}P
    \begin{pmatrix}
        f\\
        g
    \end{pmatrix}.$$
From this we can show either $\phi$ or $\psi$ must satisfy a second order ODE, both of which can be solved using a Laplace integral transform:
\begin{align}
    &\psi''-((\lambda_{1}-\lambda_{2})x+b_{1}+b_{4})\psi'+((\lambda_{1}-\lambda_{2})b_{4}x+b_{1}b_{4}-b_{2}b_{3}-(\lambda_{1}-\lambda_{2}))\psi=0, \label{psi}\\
   &\phi''-((\lambda_{1}-\lambda_{2})x+b_{1}+b_{4})\phi'+((\lambda_{1}-\lambda_{2})b_{4}x+b_{1}b_{4}-b_{2}b_{3})\phi=0. \label{phi}
\end{align}

As detailed in \cite{cod}, in order to find a function $h$ that satisfies
$$(p_{2}x+q_{2})h''+(p_{1}x+q_{1})h'+(p_{0}x+q_{0})h=0, $$
we can write $h$ as
$$h(x) = \int_{C}\varphi(s) e^{sx}ds $$
where $C$ is some contour in the complex plane $\mathbb{C}$. Then, defining
\begin{align*}
    P(s) = p_{2}s^{2}+p_{1}s+p_{0}\\
    Q(s) = q_{2}s^{2}+q_{1}s+q_{0}
\end{align*}
and choosing $C$ so that
$$V(s) = \exp\left(\int^{s} \frac{Q(\sigma)}{P(\sigma)}d\sigma\right)e^{s x} $$
takes the same value at both (possibly infinite) endpoints for all $x$ when $s$ parameterises the contour $C$, we can find a solution
$$\varphi(s) = \frac{1}{P(s)}\exp\left(\int^{s} \frac{Q(\sigma)}{P(\sigma)}d\sigma\right) \quad \mathrm{\&}\quad h(x)=\int_C\frac{V(s)}{P(s)}ds. $$

In our specific case, first solving \eqref{phi} for $\phi$ we find that
$$P_{\phi}(s) = (\lambda_{1}-\lambda_{2})(b_{4}-s), \quad \quad Q_{\phi}(s) = s^{2}-(b_{1}+b_{4})s+(b_{1}b_{4}-b_{2}b_{3}),$$
which gives us the solution
\begin{align}
    \phi(x) = \frac{1}{\lambda_2-\lambda_1}\int_{C}(s-b_{4})^{\frac{b_{2}b_{3}}{\lambda_{1}-\lambda_{2}}-1}\exp(-\frac{1}{\lambda_{1}-\lambda_{2}}\left(\frac{s^2}{2}-b_{1}s\right)+xs) ds,\label{phiint}
\end{align}
with 
$$V_{\phi}(s) = (s-b_{4})^{\frac{b_{2}b_{3}}{\lambda_{1}-\lambda_{2}}}\exp(-\frac{1}{\lambda_{1}-\lambda_{2}}\left(\frac{s^2}{2}-b_{1}s\right)+xs). $$
The function $V_{\phi}$ tends to zero as $s$ grows large within the shaded regions. 

\begin{center}
\begin{tikzpicture}
\fill[pattern color=black!70,scale=.5,domain=.2:5.5,smooth,pattern=north west lines] (-5,5) -- plot({\x},-{\x}) -- (5,-5) -- (5,5) -- plot ({\x},{\x}) -- (-5,-5) -- cycle;
    \draw[scale=.5,domain=-5:5,smooth] plot ({\x},{\x});
    \draw[scale=.5,domain=-5:5,smooth] plot ({\x},{-\x});
    \draw[>=latex,->] (-3,0) -- (3,0) node[below] {$\Re(s)$};
    \draw[>=latex,->] (0,-3) -- (0,3) node[left] {$\Im(s)$};
\end{tikzpicture}
\end{center}

This means the contours $C_{1}$ and $C_{2}$ starting and ending on the left, resp. right, and encircling the point $s = b_{4}$ are valid choices for $C$. If we require that $\frac{b_{2}b_{3}}{\lambda_{1}-\lambda_{2}}\notin\mathbb{Z}$ then integrating along these contours would give two independent solutions $\phi_{1}$ and $\phi_{2}$ of \eqref{diagode}.
\begin{center}
\begin{tikzpicture}    
\draw[fill] (0,0) circle (1pt) node[left] {$b_{4}$};
\begin{scope}[every node/.style={sloped,allow upside down}]
    \draw (-.25,.25) -- node {\midarrow} (-3,.25) node[above] {$C_{1}$};
    \draw (-3,-.25) -- node {\midarrow} (-.25,-.25);
    \draw (3,.25)  node[above] {$C_{2}$} -- node {\midarrow} (.5,.25);
    \draw (.5,-.25) -- node {\midarrow} (3,-.25);
\end{scope}
    \draw [domain=-135:135] plot ({0.3535*cos(\x)}, {0.3535*sin(\x)}); 
    \draw [domain=26.45:333.55] plot ({0.559*cos(\x)}, {0.559*sin(\x)});
\end{tikzpicture}
\end{center}
This is in essence the same as in the treatment for the physicists' Hermite equation from the mathematical appendices of \cite{lan}, and as in \S a there we will use a substitution
to explore the behaviour of solutions as $x\rightarrow \pm \infty$. 

Let
$t := s-(b_{1}+(\lambda_{1}-\lambda_{2})x)$ and let $\tilde C$ be the new contour transformed from $C$. 
Then our expression for $\phi$ becomes
\begin{align*}
    \phi(x) =\frac{1}{\lambda_2-\lambda_1} \exp(\frac{(b_1+(\lambda_1-\lambda_2)x)^2}{2(\lambda_1-\lambda_2)})\int_{\tilde{C}}(t+b_{1}-b_{4}+(\lambda_{1}-\lambda_{2})x)^{\frac{b_{2}b_{3}}{\lambda_{1}-\lambda_{2}}-1}e^{-\frac{t^{2}}{2(\lambda_{1}-\lambda_{2})}} dt.
\end{align*}
Recall that 
$$   e^{\frac{1}{2}\lambda_{2}x^{2}} \begin{pmatrix}
        \psi\\
        \phi
    \end{pmatrix}=  P
    \begin{pmatrix}
        f\\
        g
    \end{pmatrix}.$$
Since $L^{2}(\mathbb{R})$ is closed under addition, given any invertible matrix $P\in GL(2, \mathbb{C})$ we can say that $\begin{pmatrix}
        f\\
        g
    \end{pmatrix}$ is a pair of square-integrable functions if and only if $P\begin{pmatrix}
        f\\
        g
    \end{pmatrix}$ is a pair of square-integrable functions. In particular, we will have proven the ``only if" part of the theorem if we can show no linear combination of $e^{\frac{1}{2}\lambda_{2}x^{2}}\phi_{1}$ and $e^{\frac{1}{2}\lambda_{2}x^{2}}\phi_{2}$ can be square-integrable other than the cases specified.

    Use $\tilde{C}_{1}$ and $\tilde{C}_{2}$ to denote the new contours transformed from $C_1$ and $C_2$ after substitution. We can see that as $x\rightarrow +\infty$, $\tilde{C}_{1}$ will shift to the left causing the integral along it to decay like $e^{-\frac{1}{2}(\lambda_{1}-\lambda_{2})x^{2}}$ and hence $e^{\frac{1}{2}\lambda_{2}x^{2}}\phi_{1}$ will decay at the rate of $e^{\frac{1}{2}\lambda_{2}x^{2}}$. $\tilde{C}_{2}$ will extend the whole horizontal direction and  the integral does not tend to zero, meaning $e^{\frac{1}{2}\lambda_{2}x^{2}}\phi_{2}$ grows like $e^{\frac{1}{2}\lambda_{1}x^{2}}$.
    
As $x\rightarrow -\infty$ our contours shift to the right instead. This results in $e^{\frac{1}{2}\lambda_{2}x^{2}}\phi_{1}$ now being the one to grow like $e^{\frac{1}{2}\lambda_{1}x^{2}}$ and $e^{\frac{1}{2}\lambda_{2}x^{2}}\phi_{2}$ the one decaying like $e^{\frac{1}{2}\lambda_{2}x^{2}}$. Clearly this means any linear combination of these two functions will blow up at either $\infty, -\infty$ or both.

\begin{center}
    \begin{tikzpicture}
        \draw [domain=-4:-1.5] plot ({\x},{1.4^(\x)}) ;
        \draw [domain=-4:-1.5] plot ({\x},{1.4^(-\x)}) ; 
        \draw [domain=-1.5:1.5,dashed]plot ({\x},{1.4^(\x)}) ;
        \draw [domain=-1.5:1.5,dashed] plot ({\x},{1.4^(-\x)}) ;    
        \draw [domain=1.5:4]plot ({\x},{1.4^(\x)}) node[above] {\footnotesize{$e^{\frac{1}{2}\lambda_{2}x^{2}}\phi_{2}(x)$}};
        \draw [domain=1.5:4] plot ({\x},{1.4^(-\x)}) node[above] {\footnotesize{$e^{\frac{1}{2}\lambda_{2}x^{2}}\phi_{1}(x)$}};
        \draw[>=latex,->] (-5,0) -- (5,0) node[below] {$x$};
        \draw[>=latex,->] (0,-.5) -- (0,4);
    \end{tikzpicture}
\end{center}

If $\frac{b_{2}b_{3}}{\lambda_{1}-\lambda_{2}}\in\mathbb{Z}^-\cup\{0\}$, then the integrals along the horizontal directions of the path of integration cancel, and the two integrals along $\tilde C_1$ and $\tilde C_2$ reduce to an integral  along a loop around $t=b_4-b_1-(\lambda_1-\lambda_2)x$. This gives rise to a solution $\phi(x)$ which grows at most as $e^{Kx}$ at both ends (when $b_4=0$, it is essentially an Hermite polynomial). Hence $e^{\frac{1}{2}\lambda_{2}x^{2}}\phi$ decays as $e^{\frac{1}{2}\lambda_{2}x^{2}}$ at both ends.

The same argument applies to \eqref{psi} for $\psi$ when $\frac{b_{2}b_{3}}{\lambda_{1}-\lambda_{2}}+1\in\mathbb{Z}^-\cup\{0\}$, {\it i.e.} $\frac{b_{2}b_{3}}{\lambda_{1}-\lambda_{2}}\in\mathbb{Z}^-$. In this case, $e^{\frac{1}{2}\lambda_{2}x^{2}}\psi$ decays as $e^{\frac{1}{2}\lambda_{2}x^{2}}$ at both ends. As in particular $b_2\ne 0$, this Schwartz function $\psi$ gives rise to another function $\phi$ using the first relation of \eqref{diagode}. It turns out this $\phi$  solves the other relation of \eqref{diagode} and also \eqref{phi}, in fact it is equal to the $\phi$ in the previous paragraph, up to multiplication by a constant. This pair $(\phi, \psi)$ gives rise to a solution of  \eqref{Ax+B}, which is  a vector valued Schwartz function when $\frac{b_{2}b_{3}}{\lambda_{1}-\lambda_{2}}\in\mathbb{Z}^-$. We remark that this is the only $L^2(\mathbb R^2)$ solution of \eqref{Ax+B} as by ODE theory there is only one solution decays as $e^{\lambda_2x^2}$ at $+\infty$ (or $-\infty$).

Lastly, we study the solution of \eqref{psi} when $\frac{b_{2}b_{3}}{\lambda_{1}-\lambda_{2}}+1\in\mathbb{Z}^+$. Contours $C_{1}$ and $C_{2}$ both give the trivial solution. Instead we define the contours $C_{3}$ and $C_{4}$ to be the lines parallel to the real axis, running from $b_{4}$ to $-\infty$ and $+\infty$ respectively. These satisfy the condition that $V_{\psi}(s) = (s-b_{4})^{\frac{b_{2}b_{3}}{\lambda_{1}-\lambda_{2}}+1}\exp(-\frac{1}{\lambda_{1}-\lambda_{2}}\left(\frac{s^2}{2}-b_{1}s\right)+xs) = 0$ at both endpoints and so give rise to two independent solutions $\psi_{3}$ and $\psi_{4}$ of \eqref{psi}. We have (as in \eqref{phiint} for $\phi$) \begin{align*}
    \psi_{3,4}(x) = \frac{1}{\lambda_2-\lambda_1}\int_{C_{3,4}}(s-b_{4})^{\frac{b_{2}b_{3}}{\lambda_{1}-\lambda_{2}}}\exp(-\frac{1}{\lambda_{1}-\lambda_{2}}\left(\frac{s^2}{2}-b_{1}s\right)+xs) ds.
\end{align*}
Then we can use exactly the same argument as we did for the case when $\frac{b_{2}b_{3}}{\lambda_{1}-\lambda_{2}}\notin\mathbb{Z}$ to see that $e^{\frac{1}{2}\lambda_{2}x^{2}}\psi_{3}$ decays like $e^{\frac{1}{2}\lambda_{2}x^{2}}$ as $x\rightarrow +\infty$ and grows like $e^{\frac{1}{2}\lambda_{1}x^{2}}$ as $x\rightarrow -\infty$ whilst the opposite is true of $e^{\frac{1}{2}\lambda_{2}x^{2}}\psi_{4}$. This means when $\frac{b_{2}b_{3}}{\lambda_{1}-\lambda_{2}}+1\in\mathbb{Z}^+$ all solutions blow up in either the positive or negative directions; hence, there are no $L^{2}$ solutions.
\end{proof}

It should be noted that in this last case we could write our solutions explicitly. First by directly calculating $\psi_{3}$ and $\psi_{4}$ when $\frac{b_{2}b_{3}}{\lambda_{1}-\lambda_{2}}=0$ and $1$, using the complementary error function
$$\erfc(x) := \frac{2}{\sqrt{\pi}}\int_{x}^{\infty}e^{-t^{2}}dt  $$
Then, using integration by parts, we could find a recurrence relation allowing us to write solutions for all other positive integer values of $\frac{b_{2}b_{3}}{\lambda_{1}-\lambda_{2}}$ in terms of these first two.

\begin{rmk}
In the preceding theorem we considered solutions as functions of a real variable $x$. If instead we allow $x$ to take complex values we see that our study of asymptotic behaviour as $x \rightarrow \pm \infty$ is really just a restriction of the lateral connection problem to $\mathbb{R}$. It would be interesting to see if a full description of the Stokes phenomenon for this linear system could be obtained using a similar method.
\end{rmk}

We apply Theorem \ref{1stode} to equation \eqref{Main DE}. 
When $n>0$,  we have $\lambda_{1} = 2\pi n, \lambda_{2} = -2\pi n$ and
$$PB_{k, m, n}P^{-1} = 2\pi\begin{pmatrix}
      m-\frac{na}{b} +\frac{b}{8\pi}i & k-\frac nb i-\frac{b}{8\pi}i\\
    k+\frac{n}{b}i-\frac{b}{8\pi}i & -m+\frac{na}{b}+\frac{b}{8\pi}i
\end{pmatrix}, $$
where $P =\frac{1}{\sqrt{2}}\begin{pmatrix}
    1 & 1\\
    1 & -1
\end{pmatrix}$. 
In order for us to have a pair of solutions $F_{k,m,n}, G_{k,m,n}\in \mathcal{S}(\mathbb{R})$ we must have 

$$4\frac{\pi^2}{n}\left(k-\frac nb i-\frac{b}{8\pi}i\right)\left(k+\frac{n}{b}i-\frac{b}{8\pi}i\right)\in 4\pi \mathbb{Z}^-. $$
The imaginary part of the left hand side is $-\frac{kb\pi}{n}i$, so we can only have solutions when $k$ is zero. Then looking at the real part and setting $k=0$, we also need $b$ to satisfy
$$\frac{\pi}{n}\left(\left(\frac{n}{b}\right)^{2}-\left(\frac{b}{8\pi}\right)^{2}\right)\in  \mathbb{Z}^-.$$
That is to say we have a Schwartzian solution to \eqref{Main DE} whenever there is some $u\in\mathbb{Z}^-$ such that $b$ is a solution to  
\begin{equation} b^{4}+64\pi  nu b^{2}-64\pi^{2}n^{2}=0 \label{b eq}, \end{equation}
or in terms of $d=\frac{b}{8\pi}$,
\begin{equation}64\pi^2d^4-64\pi und^2- n^2=0. \label{d eq}\end{equation}
For \eqref{d eq} to hold requires $8\pi d^2\in \mathbb Z[\sqrt{D}]$ for some integer $D>0$. 
For example, since $\pi$ is a transcendental number, no rational number $d=\frac{p}{q}$ can satisfy equation \eqref{d eq} for any choice of $u, n\in \mathbb Z$.

When $n<0$, in all the above relations we have $|n|$ instead of $n$, and we have essentially the same discussion. 

\subsection{Solving the $n=0$ case}\label{n=0NT}
In the case when $n=0$, we want solutions in $\mathcal{H}_{k,l,m,0}$ with the form of 
\begin{align*}
    f = F_{k,l,m,0}e^{2\pi i (kt+lx+my)},\\
     g = G_{k,l,m,0}e^{2\pi i (kt+lx+my)}.
\end{align*}
Plugging these into \eqref{V1} and \eqref{V2} then taking the Fourier expansion with respect to $t,x$ and $y$, this tells us we have a solution when $F_{k,l,m,0}$ and $G_{k,l,m,0}$ satisfy
\begin{align*}
    -mF_{k,l,m,0}+\left(k+il-\frac{bi}{4\pi}\right)G_{k,l,m,0}&=0,\\
    (k-il)F_{k,l,m,0}+mG_{k,l,m,0}&=0.
\end{align*}

If $m=0$ then our first equation tells us that we either have $G_{k,l,0,0}=0$ or we have $k=0$ and $b=4\pi l$. Our second equation tells us either $F_{k,l,0,0}=0$ or $k=l=0$.
So we have a family of solutions given by
\begin{equation}f=C_{0},\quad g=0, \label{sol 1}\end{equation}
and another family of solutions when $b = 4\pi l \in 4\pi\mathbb{Z}\backslash\{0\}$, given by
\begin{equation} f=0,\quad g=C_{1}e^{2\pi i lx}. \label{sol 2}\end{equation}

If instead we take $m\neq 0$ then we can rewrite our equations as 
$$ \left(k^{2}+l^{2}+m^{2}-\frac{b}{4\pi}l-\frac{b}{4\pi}ik\right)F_{k,l,m,0}=0,$$
$$ G_{k,l,m,0}=-\frac{k-il}{m} F_{k,l,m,0}.$$
Clearly, if we want a nontrivial solution we need to find where
$$ k^{2}+l^{2}+m^{2}-\frac{b}{4\pi}l-\frac{b}{4\pi}ik=0.$$
This is the case exactly when $k=0$ and nonzero $l,m$ are chosen such that $b=4\pi (l^{2}+m^{2})/l$. This yields the solutions
\begin{equation}f=mC_{2}e^{2\pi i(lx+my)},\quad g = ilC_{2}e^{2\pi i(lx+my)}. \label{sol 3}\end{equation}
Note the solution \eqref{sol 2} is included in the family of solutions \eqref{sol 3}, but solution \eqref{sol 1} is not.

\section{Counting the size of $h^{0,1}$}
We will finish the computation of $h^{0,1}$ for $J_{a,b}$ with the standard orthonormal metric in this section.

Suppose we choose an almost complex structure $J_{a,b}$ such that $b$ does not solve \eqref{b eq}. How many independent solutions does \eqref{Main DE} actually have? Counting the solutions provided by \eqref{sol 1}-\eqref{sol 3} is equivalent to 
asking how many $l$ and $m$ satisfy
$$bl=4\pi(l^{2}+m^{2}),$$ 
which is equivalent to the number theoretic question: how many pairs of integers $(m,l)$ satisfy
\begin{equation}\label{circlat}
(l-d)^{2}+m^{2}=d^{2},
\end{equation}
if we relabel $\frac {b}{8\pi}$ as $d$. Notice the pair $(l, m)=(0, 0)$ corresponds to the trivial solution $s=0$ of \eqref{V1} and \eqref{V2}, but in the counting we can view it as corresponding to 
the solution \eqref{sol 1}. Other pairs satisfying \eqref{circlat} correspond to the solutions in \eqref{sol 3} which include solutions from \eqref{sol 2}. Apparently, \eqref{circlat} has no solutions beyond \eqref{sol 1} except for when $d$ is rational.  

Counting the number of solutions can be thought of as asking how many lattice points lie on a circle with centre $(d,0)$ and radius $d$. For instance, when $d= \frac 52$ we have 6 solutions as shown below. 

\vspace{5mm}
\begin{center}
\begin{tikzpicture}
    [
        dot/.style={circle,draw=black, fill,inner sep=.5pt},
    ]

\foreach \x in {-1,-0.5,...,4}{
    \foreach \y in {-2,-1.5,...,2}{
        \node[dot] at (\x,\y){ };
}}

 \node[dot, inner sep = 2pt] at (0,0){ };
 \node[dot, inner sep = 2pt] at (2.5,0){ };
 \node[dot, inner sep = 2pt] at (0.5,1){ } ;
 \node[dot, inner sep = 2pt] at (2,1){ };
 \node[dot, inner sep = 2pt] at (0.5,-1){ } ;
 \node[dot, inner sep = 2pt] at (2,-1){ };

\draw (1.25,0) circle (1.25cm);

\draw[->,thick,-latex] (0,-2.5) -- (0,2.5);
\draw[->,thick,-latex] (-1.5,0) -- (4.5,0);

\end{tikzpicture}
\end{center}
\vspace{5mm}

When $d$ is an integer this problem is very well understood and the number of such integer pairs is denoted $r_2(d^2)$, see for instance \cite{HW}. First we write $d^{2}$ as a unique product of prime numbers 
$$ d^{2} = 2^{\alpha_{0}}p_1^{\alpha_{1}}\dots p_s^{\alpha_{s}}q_1^{\beta_{1}}\dots q_t^{\beta_{t}},$$
where $p_{i}\equiv 3 \mod 4$ for all $i$ and $q_{j}\equiv 1 \mod 4$ for all $j$. The number of solutions is then given by 
$$h^{0,1} = 4(\beta_{1}+1)(\beta_{2}+1)\dots (\beta_{t}+1). $$
This reveals the interesting fact that by changing our choice of $b$ we can make $h^{0,1}$ become arbitrarily large.
It should be noted that if any of the powers of the $p_{i}$'s were odd then we would not have any solutions, but since we are looking at a square number the powers are guaranteed to be even. 

Moreover, when $d=\frac{p}{q}$ with $\gcd(p, q)=1$ and $q$ is small, we can also compute the number of solutions.

\begin{thm}\label{h01}
For the family of almost complex structures $J_{a, b}$, $b\neq 0$, and the standard almost K\"ahler metrics on the Kodaira-Thurston manifold, whenever $d:= \frac{b}{8\pi} \in \mathbb{Q}$, the Hodge number $h^{0,1}$ is equal to the number of integer pairs $(l,m)$ solving the generalised Gauss circle problem \eqref{circlat}.

Furthermore, if $d =  \frac pq$, with $\gcd(p,q)=1$ and $q \le 5$, we have 
$$
h^{0,1}=\begin{cases} \begin{array}{cll}
4(\beta_{1}+1)(\beta_{2}+1)\dots (\beta_{t}+1)& \hbox{ if $q=1$},\\
2(\beta_{1}+1)(\beta_{2}+1)\dots (\beta_{t}+1)& \hbox{ if $q=2$},\\
(\beta_{1}+1)(\beta_{2}+1)\dots (\beta_{t}+1)& \hbox{ if $q=3$},\\
(\beta_{1}+1)(\beta_{2}+1)\dots (\beta_{t}+1)& \hbox{ if $q=4$},\\
(\beta_{1}+1)(\beta_{2}+1)\dots (\beta_{t}+1)& \hbox{ if $q=5$}.
\end{array}
\end{cases}
$$
where $p^2$ has the prime factorisation $p^2= 2^{\alpha_{0}}p_1^{\alpha_{1}}\dots p_s^{\alpha_{s}}q_1^{\beta_{1}}\dots q_t^{\beta_{t}}$ with $p_{i}\equiv 3 \mod 4$ for all $i$ and $q_{j}\equiv 1 \mod 4$ for all $j$.
\end{thm}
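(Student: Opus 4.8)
The plan is to reduce the lattice-point count for the circle \eqref{circlat} to a congruence-constrained sum-of-two-squares count, and then to extract the constrained count from the classical formula $r_2(p^2)=4(\beta_1+1)\cdots(\beta_t+1)$ by exploiting the natural symmetries of the solution set. I take as given (from the text preceding the statement) that $h^{0,1}$ equals the number of integer pairs $(l,m)$ solving \eqref{circlat}.

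First I would clear denominators. Writing $d=p/q$ and multiplying \eqref{circlat} by $q^2$, the relation $(l-d)^2+m^2=d^2$ becomes $(ql-p)^2+(qm)^2=p^2$. Setting $L=ql-p$ and $M=qm$, the pairs $(l,m)$ solving \eqref{circlat} are in bijection with the integer pairs $(L,M)$ satisfying
\[ L^2+M^2=p^2,\qquad L\equiv -p \pmod q,\qquad M\equiv 0\pmod q, \]
the inverse being $l=(L+p)/q$, $m=M/q$, which are integers precisely because of the two congruences. It is clean to phrase this in the Gaussian integers: with $z=L+Mi$ the conditions read $z\bar z=p^2$ and $z\equiv -p \pmod{q\mathbb{Z}[i]}$, and the trivial solution $(l,m)=(0,0)$ corresponds to $z=-p$. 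Thus $h^{0,1}=\#\{z\in\mathbb{Z}[i]:z\bar z=p^2,\ z\equiv -p\pmod q\}$, and the total unconstrained count is $r_2(p^2)=4(\beta_1+1)\cdots(\beta_t+1)$, settling $q=1$.

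For $q>1$ the idea is that the constrained solutions form a fixed fraction of all $r_2(p^2)$ representations, cut out by the involutions $(L,M)\mapsto(M,L)$ (swap) and $(L,M)\mapsto(-L,M)$ (sign), which together with sign changes generate the dihedral action on the representation set. For the odd primes $q=3,5$ I would first record the elementary fact, checked by listing quadratic residues mod $q$, that since $p^2\not\equiv 0$ and $p^2$ is a sum of two squares mod $q$, \emph{exactly one} of $L,M$ is divisible by $q$ in every representation; the swap symmetry then pairs the representations with $q\mid M$ bijectively with those having $q\mid L$, contributing a factor $\tfrac12$. Among the representations with $q\mid M$ one has $L^2\equiv p^2$, so $L\equiv\pm p$, and the sign symmetry $(L,M)\mapsto(-L,M)$ is fixed-point free on this set (as $L\not\equiv 0$) and interchanges $L\equiv p$ with $L\equiv -p$ (distinct since $2p\not\equiv 0\bmod q$), contributing a second factor $\tfrac12$; this yields $(\beta_1+1)\cdots(\beta_t+1)$. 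The case $q=2$ is the same but simpler: as $p^2$ is odd exactly one of $L,M$ is even, the swap gives the factor $\tfrac12$, and no sign condition survives because $-p\equiv p\bmod 2$, leaving $2(\beta_1+1)\cdots(\beta_t+1)$.

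Finally $q=4$: the swap again selects the half of representations with $M$ even, and here a short computation modulo $8$, using $L^2\equiv p^2\equiv 1\pmod 8$, forces $M^2\equiv 0\pmod 8$ and hence $4\mid M$ \emph{automatically}; the residual condition $L\equiv -p\pmod 4$ (a genuine constraint since $2p\not\equiv 0\bmod 4$) is then halved by $(L,M)\mapsto(-L,M)$, giving $(\beta_1+1)\cdots(\beta_t+1)$. The point requiring the most care, and what I regard as the main obstacle, is verifying that these symmetry reductions are \emph{exact}: that the relevant involutions are fixed-point free and pair the residue classes correctly, uniformly in the factorization of $p$, so that no orbit is miscounted. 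The conceptual statement underlying all four cases is that $\mathbb{Z}[i]^{\times}$ surjects onto the norm-one residues in $(\mathbb{Z}[i]/q)^{\times}$, so that for each "canonical" representation exactly the right number of unit multiples land in the class $-p$; I expect this to be routine for $q\le 5$ but to become genuinely delicate once $q$ acquires several prime factors or a prime factor congruent to $1\bmod 4$, which is presumably why the clean closed form is asserted only for $q\le 5$.
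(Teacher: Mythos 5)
Your proposal is correct and takes essentially the same route as the paper's proof, which likewise disposes of the $n\neq 0$ contributions in a line (no rational $d$ solves \eqref{d eq}), clears denominators in \eqref{circlat} to get the constrained count of $L^{2}+M^{2}=p^{2}$ with $L\equiv -p$, $M\equiv 0 \pmod q$, and then cuts $r_{2}(p^{2})$ down by the swap and sign symmetries using the same two key lemmas: exactly one of $L,M$ is divisible by $q$ for $q=2,3,5$, and for $q=4$ a mod-$8$ computation shows the even coordinate is automatically divisible by $4$. If anything, your bookkeeping is tighter than the paper's: the paper asserts that for every solution $(x,y)$ exactly one of $(x,y),(x,-y),(y,x),(-y,x)$ satisfies the congruence constraints, which is false pointwise (for $p=5$, $q=3$, $d'=2$ both $(4,3)$ and $(4,-3)$ satisfy them, while none of the four transforms of $(-4,3)$ does), whereas your two successive bijections --- the swap pairing $\{q\mid M\}$ with $\{q\mid L\}$, then the fixed-point-free involution $(L,M)\mapsto(-L,M)$ pairing $\{L\equiv p\}$ with $\{L\equiv -p\}$ inside $\{q\mid M\}$ --- prove exactly the averaged statement that the count actually needs.
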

\begin{proof}
First, any rational number $d=\frac{p}{q}$ cannot solve \eqref{d eq}. Hence, all solutions to \eqref{V1} and \eqref{V2} are provided by linear combinations of the ones with $n=0$ in the decomposition \eqref{decomp} from \eqref{sol 1} and \eqref{sol 3}, whose dimension is equal to the number of lattice points of \eqref{circlat}. In the following, we compute this number. 

In the proof, we always write $q(l-d)=ql'-d'$ where $l'=l-\floor{\frac{p}{q}}$ and $d'=q\{\frac{p}{q}\}$. By abusing notation, we usually write $l$ for $l'$ in the following.

The case of $q=1$ is solved above. 

When $q=2$, then $p$ is odd. We can rewrite \eqref{circlat} as $(2l-1)^2+(2m)^2=p^2$. For any integer solution $(x, y)$ of $x^2+y^2=p^2$, one and only one from $(x, y)$ and $(y,x)$ is of the type $(2l-1, 2m)$. Thus, $h^{0,1}$ is half of the number of lattice points on $x^2+y^2=p^2$.

When $q=3$, then $p$ is not divisible by $3$. Rewrite \eqref{circlat} as $(3l-d')^2+(3m)^2=p^2$, where $d'$ is $1$ or $2$. For any integer solution $(x, y)$ of $x^2+y^2=p^2$, one and only one among $(x, y)$, $(x, -y)$, $(y, x)$ and $(-y, x)$ is of the type $(3l-d', 3m)$ for a given $d'$. Thus $h^{0,1}$ is a quarter of the number of lattice points on $x^2+y^2=p^2$.

When $q=4$, then $p\equiv 1$ or $3$ (mod $4$). Rewrite \eqref{circlat} as $(4l-d')^2+(4m)^2=p^2$, where $d'$ is $1$ or $3$. We look at the equation $x^2+y^2=p^2$ modulo $8$, then the even term has to be a multiple of $4$. Hence, for any integer solution $(x, y)$ of $x^2+y^2=p^2$, one and only one among $(x, y)$, $(x, -y)$, $(y, x)$ and $(-y, x)$ is of the type $(4l-d', 4m)$ for a given $d'=1$ or $3$. Thus $h^{0,1}$ is a quarter of the number of lattice points on $x^2+y^2=p^2$.

When $q=5$, then $p$ is not divisible by $5$. Rewrite \eqref{circlat} as $(5l-d')^2+(5m)^2=p^2$, where $d'$ is $1, 4$ or $2, 3$. We look at the equation $x^2+y^2=p^2$ modulo $5$, the left hand side is $1$ mod $5$ if $d'=1, 4$, or is $4$ mod $5$ if $d'=2, 3$. In both cases, for any integer solution $(x, y)$ of $x^2+y^2=p^2$, one and only one among $(x, y)$, $(x, -y)$, $(y, x)$ and $(-y, x)$ is of the type $(5l-d', 5m)$ for a given $d'$. Thus $h^{0,1}$ is a quarter of the number of lattice points on $x^2+y^2=p^2$.
\end{proof}

The above argument cannot continue for $q\ge 6$ as we have $4^2+3^2=5^2+0^2=5^2$. It would be interesting to know in general how many integer solutions of \eqref{circlat} there are.

\begin{cor}\label{anyint}
For any nonnegative integer $n=4K, 2K$ or $K$ where $K$ is odd, there is an almost complex structure that is compatible with its standard orthonormal metric on $\KT$ whose $h^{0,1}=n.$
\end{cor}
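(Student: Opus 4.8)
The plan is to read the corollary off directly from Theorem~\ref{h01} by exhibiting, for each admissible $n$, an explicit rational $d=\frac{b}{8\pi}=\frac pq$ in lowest terms and then invoking the counting formula. The starting observation is that in Theorem~\ref{h01} the integer $p^2$ is a perfect square, so every exponent $\beta_j$ is even and each factor $\beta_j+1$ is odd; hence the product $(\beta_1+1)\cdots(\beta_t+1)$ is \emph{always} an odd positive integer, which I will call $K$. Consequently the three branches $q=1$, $q=2$, and $q\in\{3,4,5\}$ produce exactly the values $4K$, $2K$, and $K$ with $K$ odd, which is precisely the list in the statement (equivalently, all positive integers not divisible by $8$, matching Theorem~\ref{intro1}).

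For the converse realization I would show that every odd $K\ge 1$ occurs as such a product via the cheapest possible choice. Given odd $K$, set $p=5^{(K-1)/2}$; since $K-1$ is even and nonnegative this is a well-defined positive integer with $p^2=5^{K-1}$. In the notation of Theorem~\ref{h01} the only prime dividing $p^2$ is $5\equiv 1 \pmod 4$, so $q_1=5$, $\beta_1=K-1$ and $t=1$ (and $t=0$, i.e.\ the empty product, when $K=1$). Thus $(\beta_1+1)\cdots(\beta_t+1)=K$, and there are no primes $\equiv 3 \pmod 4$ and no factor of $2$ to interfere.

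It then remains to select the denominator $q$ matching the desired prefactor and to check the side conditions of Theorem~\ref{h01}. For $n=K$ I take $q=3$; since $p$ is a power of $5$ it is coprime to $3$, so $d=\frac{5^{(K-1)/2}}{3}$ is in lowest terms and the $q=3$ branch gives $h^{0,1}=K$. For $n=2K$ I take $q=2$; here $p$ is odd, so $\gcd(p,2)=1$ and the $q=2$ branch gives $h^{0,1}=2K$. For $n=4K$ I take $q=1$, i.e.\ $d=p\in\mathbb Z^{+}$, and the $q=1$ branch gives $h^{0,1}=4K$. In every case $d\ne 0$, so $b=8\pi d\ne 0$ and $J_{a,b}$ (with any $a$, and $c=-\frac{a^2+1}{b}$) together with its standard orthonormal metric is the required structure. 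Since $d$ is rational it never solves \eqref{d eq}, so by the proof of Theorem~\ref{h01} all $\bar\partial$-harmonic $(0,1)$-forms come from the $n=0$ sector \eqref{sol 1}, \eqref{sol 3}, and $h^{0,1}$ equals the lattice-point count \eqref{circlat} used above.

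There is essentially no hard step: once the prime-power-of-$5$ trick trivialises the number-theoretic realizability, everything is bookkeeping. The only points needing care are the boundary case $K=1$ (where the product over $j$ is empty and $p=1$, giving $d=1,\tfrac12,\tfrac13$ for the three branches) and the verification that the parity and divisibility hypotheses of each branch of Theorem~\ref{h01} — namely $p$ odd for $q=2$ and $3\nmid p$ for $q=3$ — hold automatically for a power of $5$.
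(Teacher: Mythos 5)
Your proposal is correct and is essentially the paper's own proof: the paper likewise takes $d=\frac{b}{8\pi}=\frac{5^{(K-1)/2}}{q}$ with $q=1,2,3$ (these are Schinzel circles) and reads off $h^{0,1}=4K,2K,K$ from Theorem~\ref{h01}, handling $K=1$ separately just as you do. The only cosmetic difference is that for $K=1$ the paper uses $b=2\pi$ (the $q=4$ branch) where you use $d=\tfrac13$ (the $q=3$ branch), which gives the same count.
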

\begin{proof}
When $K=1$, we take $b=8\pi, 4\pi, 2\pi$ respectively. 

When $K>1$, we take $b=\frac{8\pi\cdot 5^{\frac{K-1}{2}}}{q}$ where $q=1, 2, 3$ respectively. These are Schinzel circles \cite{Schin}.
\end{proof}

We notice that for the vast majority of members of the family of almost complex structures $J_{a, b}$, we have $h^{0,1}=1$ as this holds for any irrational $d=\frac{b}{8\pi}$ which does not solve \eqref{d eq} (in particular, those with $[\mathbb Q(\pi d^2):\mathbb Q]>2$) and an arbitrary $a$. On the other hand, we can compute $h^{0,1}$ for those $d$  that do solve \eqref{d eq}.
\begin{prop}\label{sol>0}
If some $d$ (with $8\pi d^2\in \mathbb Z[\sqrt{D}]$ for some $D\in \mathbb Z^+$) solves \eqref{d eq} for a given $n\in \mathbb Z\setminus \{0\}$ and a certain $u\in \mathbb Z^-$, then $h^{0,1}=2|n|+1$ for  the almost complex structure $J_{a, 8\pi d}$, $\forall a\in \mathbb R$, with its standard orthonormal metric on $\KT$.
\end{prop}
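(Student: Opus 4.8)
The plan is to compute $\dim\mathcal H^{0,1}$ by adding up the contributions of the summands of the orthogonal decomposition \eqref{decomp}, handling the $n=0$ block and the $n\ne 0$ blocks separately as in the previous two subsections. The first point is that the hypothesis forces $d$ to be irrational: were $d=\tfrac pq\in\mathbb Q$, then \eqref{d eq} would present $\pi$ as a root of a nonzero rational polynomial (its leading coefficient $64d^4$ is nonzero since $d\ne 0$), contradicting transcendence of $\pi$. Hence the circle \eqref{circlat} contains no lattice point besides $(0,0)$, because a nonzero integer solution $(l,m)$ would give $d=\tfrac{l^2+m^2}{2l}\in\mathbb Q$. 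Therefore the only surviving solution in the $n=0$ block is the constant family \eqref{sol 1}, the families \eqref{sol 2} and \eqref{sol 3} all requiring $d\in\mathbb Q$, and this block contributes exactly $1$.

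For the $n\ne 0$ blocks I would invoke Theorem \ref{1stode} for \eqref{Main DE}. In the matrix $PB_{k,m,n}P^{-1}$ the off-diagonal entries $b_2,b_3$, whose product controls the $L^2$ criterion, involve only $k,n,b$ and neither $m$ nor $a$; moreover $\Im(b_2b_3)$ is a nonzero multiple of $k$, so $k=0$ is forced. With $k=0$ the criterion of Theorem \ref{1stode} is exactly the condition \eqref{d eq} (equivalently \eqref{b eq}), which by hypothesis holds for the given $n$ and $u$. Since this criterion is insensitive to $m$ and to the sign of $n$, it holds for all $0\le m<|n|$ and for both $\pm n$; for each admissible triple $(0,m,\pm n)$ Theorem \ref{1stode} produces a one-dimensional space of Schwartz solutions, which the Weil-Brezin transform \eqref{odefKT}--\eqref{odegKT} converts into a harmonic $(0,1)$-form in the summand $\mathcal H_{0,m,\pm n}$. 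These summands being mutually independent, the $n\ne 0$ blocks contribute $2|n|$ in total.

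The crux, and the step I expect to be the main obstacle, is to rule out any $|n'|\ne|n|$ also meeting the criterion for the same metric, without which I would only get $h^{0,1}\ge 2|n|+1$. Writing $s:=8\pi d^2=\tfrac{b^2}{8\pi}$ and $v:=|u|$, the criterion \eqref{d eq} rearranges to $s=n'\,\omega_v$ with $\omega_v:=16v+\sqrt{256v^2+1}$ the larger root of $X^2-32vX-1=0$. I would show a fixed $s$ pins down the positive integer pair $(n',v)$ uniquely: if $(n_1',v_1)$ and $(n_2',v_2)$ both solved $s=n'\omega_v$ then $\omega_{v_1}/\omega_{v_2}=n_2'/n_1'\in\mathbb Q$; since $256v^2+1$ lies strictly between $(16v)^2$ and $(16v+1)^2$ it is never a perfect square, so each $\omega_v$ is a quadratic irrational, and comparing rational and irrational parts in $\omega_{v_1}=r\,\omega_{v_2}$ first yields $v_1=rv_2$ and $\sqrt{256v_1^2+1}=r\sqrt{256v_2^2+1}$, and then $r^2=1$. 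Thus $r=1$ and $(n_1',v_1)=(n_2',v_2)$, so $|n|$ is the unique admissible value.

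Assembling the three counts gives $h^{0,1}=1+2|n|$, independent of $a$, which enters only the diagonal of $PB_{k,m,n}P^{-1}$ and never the $L^2$ criterion. The residual bookkeeping, that the exhibited forms are linearly independent across distinct summands of \eqref{decomp} and exhaust $\mathcal H^{0,1}$, follows at once from the orthogonality of the decomposition together with the uniqueness clause in the proof of Theorem \ref{1stode}.
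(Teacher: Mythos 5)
Your proposal is correct and takes essentially the same route as the paper's own proof: the $n=0$ block contributes only the constant solution \eqref{sol 1} because $d$ must be irrational, each of the $2|n|$ admissible triples $(k,m,n')=(0,m,\pm n)$ contributes exactly one Schwartz solution of \eqref{Main DE} by Theorem \ref{1stode} together with its uniqueness clause, and no other $|n'|$ can satisfy the criterion for the same $d$. The only substantive difference is that you prove the uniqueness of the pair $(n,u)$ via the quadratic-irrationality comparison, a step the paper asserts without detail.
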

\begin{proof}
Notice for $n\ne 0$, $\pm n$ gives the same equation \eqref{d eq} to solve where $n$ is replaced by $|n|$. Hence, without loss, we can assume $n>0$.

For any $d$, there is only one $n> 0$ that could solve \eqref{d eq}. If there is another $N> 0$ and $U\in\mathbb Z^-$ solving \eqref{d eq} for $d$, then $$n(32u+\sqrt{(32u)^2+1})=N(32U+\sqrt{(32U)^2+1}).$$ This holds only when $n=N$ and $u=U$.

Hence, by Theorem \ref{1stode}, for each integer $0\le m<n$, there will be a Schwartzian solution to \eqref{Main DE}. There is at most one for each $m$, as by ODE theory only one solution decays as $e^{-nx^2}$ at $+\infty$. 
Similarly, there will be $n$ Schwartzian solutions when we start with $-n$.

Moreover, we have one and only one solution contributed by $(l,m)=(0,0)$ in the $n=0$ case as $d$ is irrational. In other words, only solution \eqref{sol 1} will contribute. 

In total, we have $2|n|+1$ dimensions of solutions to \eqref{V1} and \eqref{V2}. This implies $h^{0,1}=2|n|+1$.
\end{proof}

In particular, Corollary \ref{anyint} and Proposition \ref{sol>0} implies Theorem \ref{intro1} for the family $J_b=J_{0,b}$. 

\section{The Kodaira-Spencer problem}\label{Sec KS}
In this section, we will choose metrics that are not standard orthonormal and give a negative answer to Question \ref{KS} for $h^{0,1}$ on the Kodaira-Thurston manifold combining the computation done in previous sections.

We choose an almost Hermitian metric, say $h_{\rho}$, such that $\phi_1-\rho\phi_2$ and $\phi_2$ form a unitary basis of the holomorphic tangent bundle, where $\rho$ is a real number which will be specified later. Then for a general $(0,1)$ form $s=f\bar \phi_1+g\bar\phi_2=f(\bar\phi_1-\rho\bar\phi_2)+(g+\rho f)\bar\phi_2$, we still have equation \eqref{V1} but equation \eqref{V2} would become \begin{equation}((1+\rho^2)V_1+\rho V_2)(f)+(V_2+\rho V_1)(g)=0,\label{V2KS}\end{equation}
since
\begin{align*}
    \partial*(f\bar{\phi}_{1}+g\bar{\phi}_{2}) &= \partial(f\phi_{2}\wedge(\bar{\phi}_{1}-\rho\bar\phi_2)\wedge\bar{\phi}_{2}-(g+\rho f)(\phi_{1}-\rho\phi_2)\wedge(\bar{\phi}_{1}-\rho\bar\phi_2)\wedge\bar{\phi}_{2})\\
   &= \partial((f+\rho^2f+g\rho)\phi_{2}\wedge\bar{\phi}_{1}\wedge\bar{\phi}_{2}-(g+\rho f)\phi_{1}\wedge\bar{\phi}_{1}\wedge\bar{\phi}_{2}) \\
    &= \left((1+\rho^2)V_{1}(f)+\rho V_1(g)+V_{2}(g)+\rho V_2(f)\right)\phi_{1}\wedge\phi_{2}\wedge\bar{\phi}_{1}\wedge\bar{\phi}_{2}\\
    &= \left(((1+\rho^2)V_1+\rho V_2)(f)+(V_2+\rho V_1)(g)\right)\phi_{1}\wedge\phi_{2}\wedge\bar{\phi}_{1}\wedge\bar{\phi}_{2}=0.
\end{align*} 

We still apply the decomposition \eqref{decomp} and first look for solutions for a fixed $n\neq0$, in the form of \eqref{odefKT} and \eqref{odegKT}.

Plugging these into \eqref{V1} and \eqref{V2KS} and after some rearrangement we obtain the ODE of type \eqref{Main DE} with
$$ A_{n} = 2\pi\begin{pmatrix}
        0 & \frac{n}{1+\rho^2}\\
        n & 0\\
        \end{pmatrix},$$
$$ B_{k,m,n} = 2\pi\begin{pmatrix}
        k + \frac{2\rho ni}{(1+\rho^2)b}    & \frac{1}{1+\rho^2}(m-n\frac{a-i}{b}+\frac{\rho bi}{4\pi})\\
        m-n\frac{a+i}{b}    & \frac{b}{4\pi}i-k\\
        \end{pmatrix}.$$

In the setting of Theorem \ref{1stode}, we have $\lambda_{1} = \frac{2\pi |n|}{\sqrt{1+\rho^2}}, \lambda_{2} = -\frac{2\pi |n|}{\sqrt{1+\rho^2}}$ and
$$PB_{k, m, n}P^{-1} = \frac{\pi}{\sqrt{1+\rho^2}}\begin{pmatrix}
      b_1 & b_2\\
   b_3 & b_4
\end{pmatrix}, $$
where $P =\begin{pmatrix}
    1 & 1\\
    \sqrt{1+\rho^2} & -\sqrt{1+\rho^2}
\end{pmatrix}$,  $P^{-1} =\frac{1}{2\sqrt{1+\rho^2}}\begin{pmatrix}
    \sqrt{1+\rho^2} & 1\\
    \sqrt{1+\rho^2} & -1
\end{pmatrix}$, and 
$$b_2=2k+\frac{m\rho^2}{1+\rho^2}-\frac{|n|a\rho^2}{b(1+\rho^2)}-i\left(\frac{|n|(2+\rho^2-2\rho)}{b(1+\rho^2)}+\frac{\rho b}{4\pi(1+\rho^2)}+\frac{b}{4\pi}\right),$$
$$ b_3=2k\left(1+\rho^2\right)-m\rho^2+\frac{|n|a\rho^2}{b}+i\left(\left(2+\rho^2+2\rho\right)\frac{|n|}{b}+\frac{\rho b}{4\pi}-\left(1+\rho^2\right)\frac{b}{4\pi} \right).$$
We omit the values of $b_1$ and $b_4$ as they are irrelevant to the following calculations.

Applying Theorem \ref{1stode}, in order to have solutions of the above ODE systems for $n\ne 0$, we know
$$\frac{\pi\sqrt{1+\rho^2}}{4|n|}\left(\left(2k+\frac{2\rho |n|i}{b(1+\rho^2)}-\frac{bi}{4\pi} \right)^2-\left(\frac{m\rho^2}{1+\rho^2}- \frac{|n|a\rho^2}{b(1+\rho^2)}-\frac{|n|(2+\rho^2)i}{b(1+\rho^2)}-\frac{\rho bi}{4\pi(1+\rho^2)}\right)\right)$$ is a negative integer. 
We notice that as $k, m, n\in \mathbb Z$,  if $a\in \mathbb Q$ and $\rho$ is a rational multiple of $\pi$, then for any rational value of $d=\frac{b}{8\pi}$ there are no solutions to the above relation as $\pi$ is a transcendental number.

Hence, for these choices of $a, b, \rho$, all the solutions are linear combinations of those with $n=0$ of type 
\begin{align*}
    f = F_{k,l,m,0}e^{2\pi i (kt+lx+my)},\\
     g = G_{k,l,m,0}e^{2\pi i (kt+lx+my)}.
\end{align*}
 Hence the equations would be
\begin{align*}
    -mF_{k,l,m,0}+\left(k+il-\frac{bi}{4\pi}\right)G_{k,l,m,0}&=0,\\
    \left((1+\rho^2)(k-il)+\rho m\right)F_{k,l,m,0}+\left(m+\rho(k-il)\right)G_{k,l,m,0}&=0.
\end{align*}

When $m=0$, we still have the solution \eqref{sol 1}
$$f=C_0, \quad g=0.$$
This is the only case that $g$ is zero. Hence, in other situations, we can write $d=\frac{b}{8\pi}$ and cancel $f$ to get
\begin{align*}
    k^2+l^2+m^2-2dl+\rho^2 (k^2+l^2-2dl)+2\rho mk&=0,\\
    dk(1+\rho^2)+\rho md&=0.
\end{align*}

Then for any rational non-half-integer $d$, if we choose $\rho$ such that $\rho^2$ is irrational, there will be no solution other than $m=k=l=0$. But for $\rho=0$, there are many solutions by Theorem \ref{h01}. 

To summarise the cases of $n\ne 0$ and $n=0$, if we choose an almost complex structure $J_{a, b}$ on $\KT$ with $a\in \mathbb Q$ and $b=\frac{8\pi\cdot 5^{\frac{K-1}{2}}}{3}$ for any odd $K\in \mathbb Z^+$, then for almost Hermitian metric $h_0$, we have $h^{0,1}=K$; and for almost Hermitian metric $h_{\rho}$ with $\rho$ a rational multiple of $\pi$, we have $h^{0,1}=1$.

Hence, we have answered Question \ref{KS}.

\begin{thm}\label{KSKT}
There exist almost complex structures on $\KT$ such that $h^{0,1}$ varies with different choices of almost Hermitian metrics.
\end{thm}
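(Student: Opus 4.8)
The plan is to exhibit a single explicit almost complex structure $J_{a,b}$ on $\KT$ together with two different Hermitian metrics that yield different values of $h^{0,1}$, thereby contradicting the conjectured metric-independence in Question \ref{KS}. Since all the hard analytic and number-theoretic work has already been done in the preceding sections, this final step is essentially a matter of assembling the pieces and verifying that the two metrics genuinely give different Hodge numbers. First I would fix rational data: take $a\in\mathbb Q$ and choose $b=\frac{8\pi\cdot 5^{(K-1)/2}}{3}$ for some odd $K\in\mathbb Z^+$, so that $d=\frac{b}{8\pi}=\frac{5^{(K-1)/2}}{3}$ is a rational number with denominator $q=3$ that is not a half-integer.

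With this choice fixed, I would compute $h^{0,1}$ for two metrics. For the standard orthonormal metric $h_0$ (the case $\rho=0$), Theorem \ref{h01} with $q=3$ applies: since $d=\frac{p}{q}$ with $p=5^{(K-1)/2}$, we have $p^2=5^{K-1}$, so in the notation of that theorem $t=1$, $q_1=5$, $\beta_1=K-1$, giving $h^{0,1}=\beta_1+1=K$. For the second metric I would use $h_\rho$ with $\rho$ chosen to be a rational multiple of $\pi$ (in particular with $\rho^2$ irrational). The $n\ne0$ contributions are killed by the transcendence-of-$\pi$ argument already carried out above (the required negative-integer condition from Theorem \ref{1stode} cannot hold), and the $n=0$ analysis of the modified system forces $m=k=l=0$ whenever $\rho^2$ is irrational and $d$ is a rational non-half-integer, leaving only solution \eqref{sol 1}. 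Hence $h^{0,1}=1$ for $h_\rho$.

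Comparing the two computations, for any odd $K>1$ we obtain $h^{0,1}=K\ne 1=h^{0,1}$ under the two metrics, so $h^{0,1}$ is not independent of the Hermitian structure. This proves the theorem. Concretely I would just cite the summarising paragraph immediately preceding the statement, which already records exactly these two values, and conclude.

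The main obstacle—already surmounted in the earlier development—is ensuring that changing the metric from $h_0$ to $h_\rho$ really does eliminate the extra $n=0$ lattice-point solutions without accidentally introducing new $n\ne0$ solutions. The delicate point is the interaction between the transcendence of $\pi$ and the irrationality of $\rho^2$: one must check that both the $n\ne0$ ODE obstruction (via the explicit $b_2,b_3$ and the negative-integer criterion of Theorem \ref{1stode}) and the $n=0$ zeroth-order equations simultaneously rule out all nontrivial solutions except the constant one. Since the excerpt has verified both, the proof itself reduces to invoking these facts; I would keep it to a few lines rather than re-deriving the ODE coefficients.
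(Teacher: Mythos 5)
Your proposal is correct and follows exactly the paper's own route: the paper likewise fixes $a\in\mathbb Q$, $b=\frac{8\pi\cdot 5^{(K-1)/2}}{3}$ with $K$ odd, computes $h^{0,1}=K$ for the standard metric $h_0$ via Theorem \ref{h01} (the $q=3$, $p^2=5^{K-1}$ case), and kills all $n\neq 0$ solutions (transcendence of $\pi$ in the criterion of Theorem \ref{1stode}) and all nonconstant $n=0$ solutions (irrationality of $\rho^2$) for $h_\rho$ with $\rho$ a rational multiple of $\pi$, leaving $h^{0,1}=1$. Your identification of the delicate point --- that both the $n\neq0$ ODE obstruction and the $n=0$ zeroth-order equations must be ruled out simultaneously for the new metric --- matches precisely the two-step verification carried out in the paper's preceding paragraphs.
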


In our construction, the almost Hermitian metrics $h_{\rho}$ are not almost K\"ahler when $\rho\ne 0$.
\begin{q}
Can we construct an almost complex structure $J$ on $\KT$, or more generally on a $4$-manifold, such that $h^{0,1}$ varies with different choices of almost K\"ahler metrics?\footnote{This question is answered affirmatively in \cite{HZ}.}
\end{q}

In addition to Question \ref{KS}, the second part of problem 20 in \cite{Hir} further asks for other definitions of Hodge numbers on almost complex manifolds, which generalise that of complex manifolds. There are some candidates, for example in \cite{CWb}, although it is not known whether this definition always results in finite Hodge numbers. A real version of Dolbeault cohomology for almost complex manifolds is defined in \cite{LZ} (see also \cite{DLZ}). 

\section{Computation of $\boldsymbol{h^{1,1}}$}\label{h11}
Finally, we will compute $h^{1,1}$ for $J_{a, b}$ with its standard orthonormal metric. 

\subsection{A general method for almost K\"ahler structures}
In turns out that $h^{1,1}$ is in fact a topological invariant when $J$ is almost K\"ahler on a $4$-manifold. In particular, on the Kodaira-Thurston manifold, when $J = J_{a,b}$ we always have $h^{1, 1}=3$.

\begin{prop}\label{akh11}
For any closed almost K\"ahler $4$-manifold $(M, J)$, the dimension of the space of $\bar\partial$-harmonic $(1, 1)$-forms $\mathcal H^{1,1}_{\bar\partial}$ is independent of the choice of almost K\"ahler metrics compatible with $J$. More precisely, $h^{1,1}=b^-+1$.
\end{prop}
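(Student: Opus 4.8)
The plan is to identify $\mathcal H^{1,1}_{\bar\partial}$ with a space defined purely in terms of the de Rham cohomology and the intersection form, so that its dimension becomes manifestly metric-independent and topological. The starting point is the characterization $\mathcal H^{1,1}_{\bar\partial}=\ker\bar\partial\cap\ker\partial*$ applied to $(1,1)$-forms on a $4$-manifold. For a real $4$-manifold with an almost K\"ahler metric, a $(1,1)$-form $\alpha$ is determined by its behaviour relative to the Hodge star and the fundamental form $\omega$. First I would recall that on a $4$-manifold the Hodge star acts on $2$-forms as an involution, splitting $\Lambda^2=\Lambda^+\oplus\Lambda^-$ into self-dual and anti-self-dual parts, with $b^\pm=\dim H^\pm$ the dimensions of the spaces of (anti-)self-dual harmonic forms. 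The self-dual part decomposes further using $\omega$: real $(1,1)$-forms split into the line spanned by $\omega$ together with the anti-self-dual primitive $(1,1)$-forms, since $*\omega=\omega$ and $*\beta=-\beta$ for a primitive $(1,1)$-form $\beta$.

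The key computational step is to rewrite the harmonicity conditions in these terms. For a real $(1,1)$-form, I expect $\partial*\alpha=0$ and $\bar\partial\alpha=0$ to combine (using that $d$ on a $4$-manifold interacts with the $(p,q)$-decomposition and that the almost K\"ahler condition $d\omega=0$ kills the torsion contributions along $\omega$) into the statement that $\alpha$ is $d$-closed and $d*\alpha$ has no $(2,0)+(0,2)$ component, or something equivalent forcing $\alpha$ to be a genuine harmonic form in the Riemannian sense. Concretely, I would aim to show $\mathcal H^{1,1}_{\bar\partial}$ equals the direct sum of the one-dimensional space $\mathbb R\cdot\omega$ (which is always $\bar\partial$-harmonic because $d\omega=0$ and $*\omega=\omega$, so $\partial*\omega=\partial\omega=0$ as the relevant components vanish) and the space of anti-self-dual harmonic $(1,1)$-forms. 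The latter, being anti-self-dual and closed, are exactly the ordinary ASD harmonic $2$-forms, whose dimension is $b^-$.

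The main obstacle I anticipate is rigorously handling the torsion terms that appear because $J$ is only almost complex, not integrable. Specifically, $\bar\partial^2\ne0$ and $d\ne\partial+\bar\partial$, so the interplay between the $(p,q)$-components of $d\alpha$ and $d*\alpha$ is more delicate than in the integrable case, and I would need the almost K\"ahler condition $d\omega=0$ to control precisely these terms. The careful point is verifying that every anti-self-dual harmonic $2$-form is automatically of type $(1,1)$ and satisfies both $\bar\partial\alpha=0$ and $\partial*\alpha=0$, and conversely that a $\bar\partial$-harmonic primitive $(1,1)$-form is forced to be $d$-closed and hence Riemannian-harmonic and ASD. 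Once this identification $\mathcal H^{1,1}_{\bar\partial}\cong\mathbb R\omega\oplus\mathcal H^-$ is established, the conclusion $h^{1,1}=b^-+1$ follows immediately from Hodge theory for the Riemannian metric, since $b^-$ is a topological invariant (the dimension of the negative part of the intersection form), and metric-independence is then automatic. I would close by noting that applying this to $J_{a,b}$ on $\KT$, where $b^-=2$, yields $h^{1,1}=3$.
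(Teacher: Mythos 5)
Your outline follows the same route as the paper: decompose $(1,1)$-forms as $\underline{\mathbb C}(\omega)\oplus(\Lambda^-_g\otimes\mathbb C)$, show that $\bar\partial$-harmonic $(1,1)$-forms coincide with $d$-harmonic ones, and count $1+b^-$. The decomposition, the fact that anti-self-dual $2$-forms are of type $(1,1)$, and the final count are all correct (one small slip: the summand should be $\mathbb C\cdot\omega$, not $\mathbb R\cdot\omega$, since $h^{1,1}$ counts complex dimensions). However, the step you describe with ``I expect $\partial*\alpha=0$ and $\bar\partial\alpha=0$ to combine \dots'' and then flag as ``the main obstacle I anticipate'' is not a technicality to be handled later: it is the entire analytic content of the proposition, and your proposal contains no argument for it. To see why it is not routine, note that on a $4$-manifold $d=\partial+\bar\partial$ on $(1,1)$-forms automatically (the $\mu$ and $\bar\mu$ components land in $\Lambda^{3,0}$ and $\Lambda^{0,3}$, which vanish), so $\bar\partial$-harmonicity of $s$ gives exactly two conditions, $(ds)^{1,2}=0$ and $(d{*}s)^{2,1}=0$, whereas $d$-harmonicity needs all four. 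If $s$ were real you could conjugate, since $(ds)^{2,1}=\overline{(ds)^{1,2}}$; but $\mathcal H^{1,1}_{\bar\partial}$ is a complex vector space that is \emph{not} a priori stable under conjugation (conjugation turns the equations $\bar\partial s=0$, $\partial*s=0$ into $\partial s=0$, $\bar\partial*s=0$), so you cannot reduce to real forms. Concretely, writing $s=f\omega+\alpha$ with $\alpha$ anti-self-dual, your two equations give only $d\alpha=\partial f\wedge\omega-\bar\partial f\wedge\omega$ with $f$ an arbitrary smooth function; something more is needed to force $df=0$, and the difficulty is not, as you suggest, checking that ASD harmonic forms are $(1,1)$ (that is pointwise linear algebra), but this converse direction.

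The paper closes this gap by quoting the almost K\"ahler harmonic identity of Cirici--Wilson \cite{CW}, $\Delta_{\bar\partial}+\Delta_\mu=\Delta_\partial+\Delta_{\bar\mu}$ on compact almost K\"ahler manifolds: since $\mu,\mu^*,\bar\mu,\bar\mu^*$ all annihilate $(1,1)$-forms in complex dimension $2$ for bidegree reasons, this identity yields $\mathcal H^{1,1}_{\bar\partial}=\mathcal H^{1,1}_d$ at once, after which the decomposition argument ($*\alpha=-\alpha$, $*\beta=\beta$ give $d\alpha=d(f\omega)=0$, and $df\wedge\omega=0$ forces $f$ constant by nondegeneracy of $\omega$) finishes the proof exactly as you planned. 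If you prefer not to cite that identity, the gap can also be filled by an elementary integration by parts: from $d\alpha=\partial f\wedge\omega-\bar\partial f\wedge\omega$ and Stokes' theorem,
\begin{equation*}
0=\int_M d\left(\bar f\, d\alpha\right)=\int_M d\bar f\wedge d\alpha
=\int_M \left(\bar\partial\bar f\wedge\partial f-\partial\bar f\wedge\bar\partial f\right)\wedge\omega ,
\end{equation*}
and the pointwise positivity of $i\gamma\wedge\bar\gamma\wedge\omega$ for $(1,0)$-forms $\gamma$ shows the integrand is $2i\left(|\partial f|^2+|\bar\partial f|^2\right)$ times the volume form, forcing $df\equiv 0$ (here $d\omega=0$ was used to write $d(f\omega)=df\wedge\omega$). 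Either of these supplies the missing step; without one of them, your text is a correct plan rather than a proof.
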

Here $b^-$ denotes the dimension of anti-self-dual harmonic $2$-forms.  
\begin{proof}
Let $g$ be a $J$-compatible almost K\"ahler metric on $M$. The pair $(g, J)$ defines a $J$-invariant closed $2$-form $\omega$ by $\omega(u, v)=g(Ju, v)$.

Let $\Lambda_J^+$ be the bundle of real $2$-forms with $\alpha(JX, JY)=\alpha(X, Y)$ and $\Lambda_g^-$ the bundle of $g$-anti-self-dual forms, we have the bundle decomposition \cite{DLZ} $$\Lambda_J^+=\underline{\mathbb R}(\omega)\oplus \Lambda_g^-.$$ When we complexify it, we have $$\Lambda_J^{1,1}=\underline{\mathbb C}(\omega)\oplus (\Lambda_g^-\otimes \mathbb C).$$

Let $\mu$ denote the component of the exterior derivative which changes the bidegree of a $(p,q)$-form by $(+2,-1)$.
Then define the operator $\Delta_{\mu}:=\mu^*\mu+\mu\mu^*$ and the space $\mathcal{H}_{\mu}^{1,1}=\ker \Delta_{\mu}$. For bidegree reasons $\Delta_{\mu} = 0$ when acting on $(1,1)$-forms, hence we have $\mathcal H_{\bar\partial}^{1,1}=\mathcal H_{\bar\partial}^{1,1}\cap\mathcal H_{\mu}^{1,1}$. Furthermore, from Theorem 4.1 in \cite{CW} we know that $\mathcal H_{\bar\partial}^{1,1}\cap\mathcal H_{\mu}^{1,1} = \mathcal{H}_d^{1,1}$.
 Therefore, calculating $h^{1,1}$ is equivalent to calculating the dimension of the space of $d$-harmonic $(1,1)$-forms with respect to the almost K\"ahler metric $g$. We can write any harmonic $(1, 1)$-form as $\alpha+\beta$ where $\beta=f\cdot \omega$ and $\alpha\in \Omega_g^-\otimes \mathbb C$. We then have $$d\alpha+d\beta=0, \quad d*(\alpha+\beta)=0.$$ But we know $*\alpha=-\alpha$ and $*\beta=\beta$.  Hence, $d*(\alpha+\beta)=0$ implies $d\alpha=d\beta$. Combining with the first equation we have $d\alpha=d\beta=0$. 

As $\omega$ is non-degenerate, $d\beta=df\wedge\omega =0$ is equivalent to saying $f$ is a constant. Hence any harmonic $(1,1)$-form is a sum of a complex constant multiple of $\omega$ and a complexified  anti-self-dual harmonic form. Hence, $h^{1,1}=b^-+1$.
\end{proof}

On the Kodaira-Thurston manifold we have $b^- = b^+ = 2$, and therefore for any almost K\"ahler structure (\textit{e.g.} $J_{a,b}$ with the standard orthonormal metric), we have $h^{1,1} = 3$. From our calculation of $h^{0,2}$ and $h^{2,0}$ in Section \ref{hodge}, we observe that $h^{2,0}+h^{1,1}+h^{0,2}$ of $J_{a, b}$ is well defined, and could be $3$, $4$ (integrable case), or $5$, while $b_2=4$ for $\KT$. 

The almost K\"ahler condition is indispensable in Proposition \ref{akh11}. However, we would like to know whether this is still true in general when an almost Hermitian metric is fixed.

\begin{q}
Let $(M, J)$ be an almost complex $4$-manifold which admits an almost K\"ahler structure. Does there exist an almost Hermitian metric which is not almost K\"ahler, such that $h^{1,1}\ne b^-+1$? \footnote{This question is answered negatively in \cite{TT}, $h^{1,1}=b^-$ if the metric is strictly locally almost K\"ahler.}
\end{q}

The following identity was proven in \cite{CW} to hold for any smooth $(p,q)$-form $s$ on an almost K\"ahler manifold:
\begin{equation}\label{id}
  (\Delta_{\mu}+\Delta_{\bar{\partial}})s=(\Delta_{\bar{\mu}}+\Delta_{\partial})s,  
\end{equation}
 where $d=\bar\mu+\bar\partial+\partial+\mu$ decomposed with respect to bidegree.
 
This fact was never used in the calculations of $h^{p,q}$ in earlier sections, although this does provide us with a tool for use in future calculations.
We will consider $(0, 1)$-forms to illustrate how this identity might be used. 

\begin{prop}\label{prop 1}
On a compact almost K\"ahler manifold $M$, with real dimension 4, any $\bar\partial$-harmonic smooth $(0,1)$-form $s$ must satisfy
$$ \|\partial s\|^{2}= \|\mu s \|^{2}.$$
\begin{proof}
When acting on a $(0,1)$-form $\bar\mu, \partial^*$ and $\mu^*$ are all zero for bidegree reasons. If a $(0,1)$-form is $\bar\partial$-harmonic, then $\bar\partial$ and $\bar\partial^*$ are also zero. Using all of this, we can simplify \eqref{id} show that a $\bar\partial$-harmonic $(0,1)$-form satisfies
$$\partial^* \partial s = \mu^* \mu s. $$

Let $g$ denote the almost K\"ahler metric on $M$. There is a natural way to extend $g$ to differential forms. We can then define the inner product
$$(\cdot, \cdot) = \int_{\KT}g(\cdot, \cdot) dV. $$
It is with respect to this inner product that the adjoint operators $\partial^*$ and $\mu^*$ are defined.

Taking the inner product of the identity $\partial^* \partial s = \mu^* \mu s $ with $s$ and making use of the property $$(\mu^{*}\mu s,s)=(\mu s,\mu s) \quad \text{ and } \quad (\partial^{*}\partial s,s)=(\partial s,\partial s) $$
we conclude that
$$ \|\partial s\|^{2}= \|\mu s \|^{2} .$$
\end{proof}
\end{prop}

This proposition can be applied to the example of the Kodaira-Thurston manifold, equipped with the almost complex structure $J_{a,b}$ and the standard orthonormal metric to obtain an upper bound on the size of $n$ for which $f,g \in \mathcal{H}_{k,m,n}$ can still give solutions to $\eqref{V1}$ and $\eqref{V2}$.

\begin{prop}
On the almost complex manifold $(\KT, J_{a,b})$, with the standard orthonormal metric, there are no $\bar\partial$-harmonic $(0,1)$-forms given by $f\bar\phi_1 + g \bar\phi_2$ with $f,g \in \mathcal{H}_{k,m,n}$ for any $k,m\in \mathbb{Z}, 0\leq m < \abs{n}$, unless $\abs{n}\leq \frac{\sqrt 2 b^2}{8\pi}$.
\begin{proof}
Recall that we can write a general $\bar\partial$-harmonic $(0,1)$-form as $s = f \bar\phi_1 + g\bar\phi_2$, with $f,g \in C^{\infty}(\KT)$. Writing out $\mu s$ and $\partial s$ explicitly Prop. \ref{prop 1} tells us that
\begin{align*}
    \int_{\KT}\left(\abs{V_1(f)}^2+\abs{V_2(g)}^2+\abs{V_2(f)-\frac{b}{4}g}^2+\abs{V_1(g)-\frac{b}{4}g}^2\right)dV
    =\int_{\KT}\left(\abs{\frac{b}{4}g}^2\right)dV.
\end{align*}
Here the integral is taken over $[0,1]^4$ as it is the fundamental domain on $\mathbb{R}^4$ with respect to the identification \eqref{quot}.  Since $|V_1(f)|=|V_2(g)|$ by \eqref{V2}, we have an inequality 
\begin{equation}
    \int_{\KT}2|V_2(g)|^2 dV\leq\int_{\KT}\left(\abs{\frac{b}{4}g}^2\right)dV.\label{ineq}
\end{equation}
If we now assume $f,g \in \mathcal{H}_{k,m,n}$ then this inequality becomes
$$\int_{\KT}2\pi^{2} \abs{\left(m+nx-\frac {a-i}{b}n\right)g}^{2}dV\leq \int_{\KT}\frac{b^{2}}{16}\sum_{m\in\mathbb{Z}}|g|^{2}dV. $$
In particular, if we discard the real part of $m+nx-\frac {a-i}{b}n$ we are left with
$$\frac{2\pi^{2}n^{2}}{b^{2}}\int_{\KT}\sum_{m\in\mathbb{Z}}|g|^{2}dV\leq \frac{b^{2}}{16}\int_{\KT}\sum_{m,n\in\mathbb{Z}}|g|^{2}dV ,$$
giving us the bound $|n|\leq\frac{\sqrt{2}b^2}{8\pi}$ on the largest value of $n$ we need to check for solutions. 
\end{proof}
\end{prop}

\subsection{An alternative method}
We should also be able to use our PDE-to-ODE method for $h^{0,1}$ to calculate $h^{1,1}$ for $J_{a, b}$ and the standard orthonormal metrics. 

We write a general $(1,1)$-form $s$ and its Hodge star $*s$ as 
\begin{align*}
    s&=f^{(1,1)}\phi_{1}\wedge\bar{\phi}_{1}+f^{(1,2)}\phi_{1}\wedge\bar{\phi}_{2}+f^{(2,1)}\phi_{2}\wedge\bar{\phi}_{1}+f^{(2,2)}\phi_{2}\wedge\bar{\phi}_{2},\\
    *s &=-f^{(2,2)}\phi_{1}\wedge\bar{\phi}_{1}+f^{(1,2)}\phi_{1}\wedge\bar{\phi}_{2}+f^{(2,1)}\phi_{2}\wedge\bar{\phi}_{1}+f^{(1,1)}\phi_{2}\wedge\bar{\phi}_{2}.
\end{align*}
We then calculate $\bar{\partial} s$ and $\partial *s$, requiring both to be zero. Looking at the   $\phi_{1}\wedge\bar{\phi}_{1}\wedge\bar{\phi}_{2}$ and $\phi_{2}\wedge\bar{\phi}_{1}\wedge\bar{\phi}_{2}$ components of $\bar{\partial}s$ separately yields 
$$\bar{V}_{2}(f^{(1,1)})-\bar{V}_{1}(f^{(1,2)})-\frac b4 \left(f^{(1,2)}+f^{(2,1)}\right)=0,$$
$$\bar{V}_{2}(f^{(2,1)})-\bar{V}_{1}(f^{(2,2)})=0.$$
Similarly, looking at the $\phi_{1}\wedge\phi_{2}\wedge\bar{\phi}_{1}$ and $\phi_{1}\wedge\phi_{2}\wedge\bar{\phi}_{2}$ components of $\partial * s$ yields 
$$V_{2}(f^{(2,2)})+V_{1}(f^{(2,1)})+\frac b4\left(f^{(1,2)}+f^{(2,1)}\right)=0,$$
$$V_{2}(f^{(1,2)})+V_{1}(f^{(1,1)})=0. $$
As in our calculation of $h^{0,1}$ we use \eqref{decomp} to look for solutions given by $f^{(1,1)},f^{(1,2)},f^{(2,1)},f^{(2,2)}\in\mathcal{H}_{k,l,m,0}$ and $\mathcal{H}_{k,m,n}$.

 When looking for solutions in $\mathcal{H}_{k,l,m,0}$, we write
 $$f^{(i,j)} = G^{(i,j)}e^{2\pi i (kt+lx+my)}. $$
 Substituting this into the PDE system yields the following
\begin{align*}
     &mG^{(1,1)}-(k+il-\frac{bi}{4\pi})G^{(1,2)}+\frac{bi}{4\pi}G^{(2,1)}=0, \\
    &mG^{(2,1)}-(k+il)G^{(2,2)}=0,\\ 
    &mG^{(2,2)}+(k-il-\frac{bi}{4\pi})G^{(2,1)}-\frac{bi}{4\pi}G^{(1,2)}=0, \\
    &mG^{(1,2)}+(k-il)G^{(1,1)}=0.   
\end{align*}
It is relatively simple to see that when $k=l=m=0$ the above equations tell us that $G^{(1,2)}=-G^{(2,1)}$ and give no restrictions on $G^{(1,1)}$ and $G^{(2,2)}$. Therefore we have a family of solutions given by
$$f^{(1,1)}=C_{0},\quad f^{(1,2)}=C_{1},\quad f^{(2,1)}=-C_{1},\quad f^{(2,2)}=C_{2} $$
for any three constants $C_{0}, C_{1}, C_{2}\in\mathbb{C}$.

Since $J_{a,b}$ is always almost K\"ahler, Proposition \ref{akh11} implies $h^{1,1}=3$, so there are no solutions to be found. We should be able to directly derive this by solving the ODE systems obtained when looking for solutions in $\mathcal{H}_{k,m,n}$, as in the calculation of $h^{0,1}$. We expect to have a general result similar to Theorem \ref{1stode} which works for $N\times N$ systems, in particular when $N=4$. However, as contrasted with Proposition \ref{sol>0}, we will not have any Schwartzian solutions to the ODE systems derived from our current setting.

\end{document}